\documentclass[reqno]{amsart}
\usepackage{amsmath,amssymb,amsthm,amsrefs, amscd}
\usepackage{xcolor}

\usepackage[english]{babel}
\usepackage[utf8]{inputenc}	

\newtheorem{thm}{Theorem}[section]
\newtheorem{pro}[thm]{Proposition}
\newtheorem{lem}[thm]{Lemma}
\newtheorem{cor}[thm]{Corollary}
\theoremstyle{definition}
\newtheorem{defi}[thm]{Definition}
\newtheorem{rem}[thm]{Remark}

\newtheorem{exa}[thm]{Example}

\newcommand{\nd}{\noindent}
\newcommand{\vu}{\vspace{.1cm}}
\newcommand{\vd}{\vspace{.2cm}}

\newcommand{\e}{\mathfrak{e}}

\hyphenation{glo-ba-li-za-tion}
\hyphenation{non-de-ge-ne-ra-te}
\hyphenation{e-qui-va-lent}
\hyphenation{follow-ing}

\allowdisplaybreaks 
\begin{document}
	\allowdisplaybreaks

\title{Multiplier Hopf Algebras: Globalization for Partial Actions}

\maketitle

\author{GRAZIELA FONSECA
	\footnote{Instituto Federal de Educa\c c\~ao, Ci\^encia e Tecnologia Sul-Rio-Grandense, Charqueadas, RS  96.745-000, Brazil. E-mail address: grazielalangone@gmail.com.}}, \author{ENEILSON FONTES \footnote{Instituto de Matem\'atica, Estat\'istica e F\'isica, {Universidade Federal do Rio Grande}, Rio Grande, RS 96.201-900, Brazil. E-mail address: eneilsonfontes@furg.br.}},  \author{GRASIELA MARTINI
\footnote{Instituto de Matem\'atica, Estat\'istica e F\'isica, Universidade Federal do Rio Grande, Rio Grande, RS 96.201-900, Brazil. E-mail address: grasiela.martini@furg.br.}}

\begin{abstract}
In partial action theory, a pertinent question is whenever given a partial action of a Hopf algebra $A$ on an algebra $R$, it is possible to construct an enveloping action. The authors Alves and Batista, in \cite{Muniz}, have shown that this is always possible if $R$ is unital. We are interested in investigating the situation where both algebras $A$ and $R$ are not necessarily unitary. A nonunitary natural extension for the concept of Hopf algebras was proposed by Van Daele, in \cite{Multiplier}, which is called multiplier Hopf algebra. Therefore, we will consider partial actions of multipliers Hopf algebras on algebras with a nondegenerate product and we will present a globalization theorem for this structure. Moreover, Dockuchaev, Del Rio and Sim\'on, in \cite{Micha}, have shown when group partial actions on nonunitary algebras are globalizable. Based in this paper, we will establish a bijection between globalizable group partial actions and partial actions of a multiplier Hopf algebra.
\end{abstract}

\vu
{\nd\scriptsize{\bf Keywords:} multiplier Hopf algebra, partial action, globalization\\}
{\nd\scriptsize{\bf Mathematics Subject Classification:} primary 16T99; secondary 20L99}

\section{introduction}

Natural examples of partial actions can be easily obtained by restriction of  global actions to subsets not necessarily invariant by such actions. On the other hand, to investigate the existence of an enveloping action for a determined partial action means to find out under what conditions such a partial action can be obtained, less than an equivalence, as restriction of a global one.

The first theorem in the algebraic context about the existence of enveloping actions is due to R. Exel and M. Dokuchaev, in \cite {exel2}, for partial group actions on algebras. Later, M. Alves and E. Batista extended these ideas to the context of  partial actions of Hopf algebras, in \cite{MunizandBatista}. Precisely, for a partial action of a Hopf algebra  $A$ on a unital algebra $L$, the authors have shown that there is a global action on an algebra $R$ which contains $L$ as an ideal such that the partial action of $A$ on $L$ is induced by the global action of $A$ on $ R $. In this same paper, the uniqueness of enveloping actions has been also investigated.

 Thereby, our motivation is to investigate what occurs to these results in the context of multiplier Hopf algebras, which has been proposed by A. Van Daele in \cite {Multiplier}.

Recalling, an algebra $A$ over a field $\Bbbk$ is endowed with a \emph{nondegenerate} product when it has the property  that $a=0$ if $ab =0$ for all $b \in A$ and $b=0$ if $ab=0$ for all $a \in A$. In this case, we will denote the multiplier algebra of $A$ by $M(A)$ which is the usual $\Bbbk$-vector space of all the ordered pairs $(U,V)$ of linear maps on $A$ such that $ V(a)b=aU(b)$, for all $a,b\in A$. It follows immediatly that $U(ab)=U(a)b$ and $V(ab)=aV(b)$, for all $a,b\in A$.
The product is given by the rule $(U,V)(U',V')=(U\circ U',V'\circ V).$ Such an algebra is associative and unital with identity element given by the pair $1=(\imath,\imath)$ where $\imath$ denotes the identity map of $A$. Moreover, there exists a canonical algebra monomorphism $\jmath:A\to M(A)$ given by $a\mapsto (U_a,V_a)$, where $U_a$ (resp., $V_a$) denotes the left (resp., right) multiplication by $a$, for all $a\in A$. Furthermore, if $A$ is unital, then $\jmath$ is an isomorphism.

\vu

Let $A$ be an algebra with a nondegenerate product. A \emph{comultiplication} is an algebra homomorphism $\Delta: A\longrightarrow M(A\otimes A)$
satisfying the following conditions
\begin{center}
	$\Delta(a)(1\otimes b)\in A\otimes A$\ \ \ \ \  and\ \ \ \ \  $(a\otimes 1)\Delta(b)\in A\otimes A$
\end{center}
and the co-associativity property 
\begin{center}
	$(a\otimes 1\otimes 1)((\Delta\otimes \imath)(\Delta(b)(1\otimes c)))=((\imath\otimes\Delta)((a\otimes 1)\Delta(b)))(1\otimes 1\otimes c),$
\end{center}
for all $a, b$,$c$ in $A$.

A pair $(A,\Delta)$ is called a \textit{multiplier Hopf algebra} if $\Delta$ is a comultiplication and the linear maps 
\begin{eqnarray*}
	T_1: A\otimes A &\longrightarrow &A\otimes A \quad \ \ \ \ \ \ \ \ \ \mbox{and} \ \ \ \ \ \ \ \ \ \quad  T_2:A\otimes A \longrightarrow A\otimes A\\
	\quad \ \ \ \ \ \ a\otimes b &\longmapsto &\Delta(a)(1\otimes b) \ \ \ \ \ \ \ \  \ \ \ \ \ \ \ \ \ \ \ \ \ \ \quad a\otimes b \longmapsto (a\otimes 1)\Delta(b)
\end{eqnarray*}
are bijective. Furthermore, a multiplier Hopf algebra $(A,\Delta)$ is called \emph{regular} if $(A,\sigma\Delta)$  is also a multiplier Hopf algebra, where $\sigma$ denotes the canonical flip map.

\vu

Analogously to the Hopf case, we have the existence of  unique linear maps called \emph{counit} and  \emph{antipode} for multiplier Hopf algebras.  The counit is an algebra homomorphism $\varepsilon :A\longrightarrow\Bbbk$ such that

\begin{center}
	$(\varepsilon\otimes \imath)(\Delta(a)(1\otimes b))=ab$ \ \ \ \ \ and \ \ \ \ \ $(\imath\otimes\varepsilon)((a\otimes 1)\Delta(b))=ab$
\end{center}
and the antipode is an algebra anti-homomorphism $S: A\longrightarrow M(A)$ such that
\begin{center}
	$m(S\otimes \imath)(\Delta(a)(1\otimes b))=\varepsilon(a)b$ \ \ \ \ \ and \ \ \ \ \ $m(\imath\otimes S)((a\otimes 1)\Delta(b))=\varepsilon(b)a,$
\end{center}
for all $a,b$ in $A$.

\vu

It is easy to check that any Hopf algebra is a multiplier Hopf algebra. Conversely, if $(A, \Delta)$ is a multiplier Hopf algebra and $A$ is unital, thus $A$ is a Hopf algebra. This shows that the notion of a multiplier Hopf algebra is a natural extension of a Hopf algebra for nonunital algebras.

\vu

The concept of multiplier Hopf algebras was motivated by the algebra $A_G$ with pointwise product of the complex functions with finite support  on any group $G$, i.e. functions that assume nonzero values for a finite set of elements of $G$. In this case, the multiplier algebra $M(A_G)$ consists of all complex functions on $G$ and $A_G \otimes A_G$ can be identified with the complex functions with finite support from $G\times G$ to $\mathbb{C}$. Furthermore, $A_G$ is a multiplier Hopf algebra with comultiplication  given by $\Delta(f)(p,q)=f(pq)$, and therefore, the counit and the antipode are given by $\varepsilon(f)=f(1_G)$ and $(S(f))(p)=f(p^{-1})$, for all $f\in A_G$ and $p,q\in G$, respectively.

\vu

In classical theory, the dual Hopf algebra is also a Hopf algebra if its dimension is finite. On the other hand, for the context of regular multiplier Hopf algebras, A. Van Dale introduced, in \cite {Frame}, a linear dual structure using a left integral. Given a regular multiplier Hopf algebra $(A, \Delta)$, a nonzero linear functional $\varphi$ on $A$ is called a left integral if $(\imath\otimes\varphi)\Delta(a)=\varphi(a)$, for all $a\in A$. It can define the dual algebra  $$\hat{A}=\{\varphi(\underline{\hspace{0.2cm}} a) ; a\in A\},$$ which is also a regular multiplier Hopf algebra for any dimension.

\vd

An important property for a multiplier Hopf algebra $(A, \Delta)$ is the existence of bilateral local units. This means that,  for any finite set of elements $a_1, \ldots , a_n$ of $A$ there exists an element $e\in A$ such that $e a_i = a_i = a_i e$, for all $1 \leq i \leq n$ (see \cite{Sweedler}). This  fact is used to justify  that $A^2=A$, which allowed to show, in \cite {Multiplier}, that the comultiplication $\Delta$ is a nondegenerate algebra homomorphism (cf.  \cite[Appendix]{Multiplier}). The existence of bilateral local units also allows us to use the Sweedler's notation in this context (see \cite{Action} and \cite{Sweedler}).

\vu

In \cite{Action}, the authors introduced the notion of module algebra for a multiplier Hopf algebra $(A,\Delta)$ acting on an algebra $R$ with a nondegenerate product. We call $R$ a \textit{left $A$-module algebra} if there exists a surjective linear  map $\triangleright:A\otimes R\longrightarrow R$ denoted by $\triangleright(a\otimes x)=a\triangleright x$, satisfying $a\triangleright (b\triangleright x)=ab\triangleright x$ and $a\triangleright xy=(a_{(1)}\triangleright x)(a_{(2)}\triangleright y)$. This notion is usual in the classical Hopf action context, however we need to be careful with respect to the last identity, since $\Delta(a)$ does not lie in $A\otimes A$, but in $M(A\otimes A)$. The Sweedler notation has sense because we can write $y=e\triangleright y$, for some $e\in A$, then 
$$a\triangleright xy=\mu_R (\Delta(a)(1\otimes e))(x\otimes y)=(a_{(1)}\triangleright x)(a_{(2)}\triangleright y),$$
where $\mu_R$ denote the product from $R\otimes R$ to $R.$

\vu

In this paper, we have as objective to extend the theorem of existence of enveloping actions, also called globalization theorem, to the context of partial actions of multiplier Hopf algebras on algebras with a nondegenerate product. In addition, the uniqueness of the enveloping actions will be also investigated.

\vu

We will work in Section 2 with the vector space $ Hom (A, R) = \{f: A \rightarrow R \ ; \ \mbox {f is linear} \} $ in the context of multiplier Hopf algebras. In the case of Hopf algebras, this structure is fundamental for the development of several concepts. The most basic is when we consider $R=\Bbbk$ and $A$ a coalgebra, in this way we construct a dual structure for $A$, denoted by $ Hom (A, \Bbbk) = A^*$, which will always be an algebra with the convolution product  $ (fg) (a) = (f \otimes g) \Delta(a)$. 

In addition, considering $A$ a bialgebra, $Hom (A, A)$ is essential for the definition of the antipode $S$ as the convolutive inverse of the identity map in $Hom (A, A).$ When $A$ is a Hopf algebra, we also have that the algebra $Hom (A, R)$ is a fundamental tool for the construction of  a globalization for a partial $A$-module algebra $R$, as presented in \cite{MunizandBatista}.

The theory of partial actions of multiplier Hopf algebras on not necessarily unital algebras was developed in \cite{Grasiela}, generalizing the theory constructed by S. Caenepeel and K. Jassen in \cite{Caenepeel}, and also the theory developed by A. Van Daele in \cite{Action}. From these concepts, in section 3 we will introduce the notion of globalization for a partial module algebra extending the theory proposed in \cite{MunizandBatista}. Besides that, considering $ R $ a left $s$-unital algebra, that is, for every $ x \in R $, $ x = Rx $, we establish an one-to-one correspondence between globalizable partial actions of any group $ G $ on $ R $ and partial actions of the dual algebra $\hat{A}_G$ on $R$.

\vu

Throughout this paper, vector spaces and algebras   will be all considered over a fixed field $\Bbbk$. The symbol $\otimes$ will always mean $\otimes_{\Bbbk}$. The pair $(A,\Delta)$ (or simply $A$) will always denote a regular multiplier Hopf algebra and $R$ an algebra with a nondegenerate product, unless others conditions are required.

\section{Convolution algebra structures on subspaces of $Hom(A,R)$}	

\quad
 Consider a regular multiplier Hopf algebra $A$  and an algebra $R$  with a nondegenerate product. We know that there not exists a natural algebra structure on $Hom(A,R)$ given by the product convolution
 \begin{equation}\label{confusao}
 (f g)(a)= \mu_{R}((f \otimes g)\Delta(a)),
 \end{equation}
 where $\mu_{R}$ denotes the product of $R$. This happens because the right hand side of the equality (\ref{confusao}) only make sense if $\Delta(a)$ is covered. In this case, the idea is consider a subspace of $Hom(A,R)$ on which we can induce an algebra structure similar to convolution product. For such a purpose we define the vector subspace
 \begin{eqnarray*}
Hom^r(A,R)=\{\sum_i f_i(\underline{\hspace{0.3cm}}a_i);  \ a_i \in A, f_i \in Hom(A,R)\}.
 \end{eqnarray*}

Since $A$ has bilateral local units any linear combination of elements
$f_i(\underline{\hspace{0.3cm}}a_i)$ can be written as $f(\underline{\hspace{0.3cm}}e)$, for some $f\in Hom(A,R)$ and $e\in A.$ In fact, given $f=\sum_i^n f_i(\underline{\hspace{0.3cm}}a_i)\in Hom(A,R)$
and $e\in A$ such that $ea_i=a_i=a_ie$, for each $i \in \{1, ..., n\}$, we have 
$$\begin{array}{rl}
	f(\underline{\hspace{0.3cm}}e)(b)= f(be)= \sum_i^n f_i(bea_i)= \sum_i^n f_i(ba_i)= \sum_i^n f_i(\underline{\hspace{0.3cm}}a_i)(b),
\end{array}$$
for all $b\in A$, i.e. $\sum_i^n f_i(\underline{\hspace{0.3cm}} a_i)=f(\underline{\hspace{0.3cm}} e)$.

\begin{thm} \label{hom} $Hom^r(A,R)$ is an algebra with product given by
	$$(fg)(c)=\mu_{R}((f \otimes g)\Delta(c)),$$
	for all $c\in A$ and $f, g\in Hom^r(A,R)$.
\end{thm}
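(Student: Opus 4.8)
The plan is to proceed in four steps: assign a meaning to the right-hand side, check it is well defined and bilinear, show $Hom^r(A,R)$ is closed under it, and verify associativity. The delicate point will be closure.

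\emph{Meaning and well-definedness.} Let $f,g\in Hom^r(A,R)$. By the remark preceding the statement I may fix $f'\in Hom(A,R)$, $e\in A$ with $f=f'(\_e)$, and $g'\in Hom(A,R)$, $e'\in A$ with $g=g'(\_e')$. Since $e\otimes e'\in A\otimes A$ and $\Delta(a)\in M(A\otimes A)$, the product $\Delta(a)(e\otimes e')$ genuinely lies in $A\otimes A$ (observe that one never needs expressions of the type $\Delta(a)(e\otimes 1)$, which would require regularity of $A$), so I may put
$$(fg)(a):=\mu_R[(f'\otimes g')(\Delta(a)(e\otimes e'))].$$
Unravelling $f=f'(\_e)$ and $g=g'(\_e')$ shows the right-hand side is $\mu_R$ applied to ``$\sum f(a_1)\otimes g(a_2)$'', i.e.\ it is exactly the intended reading of $\mu_R[(f\otimes g)\Delta(a)]$. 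Independence of the chosen representatives is obtained from the bilateral local units of $A$: if $g=g_1'(\_e_1')=g_2'(\_e_2')$, choose $u\in A$ with $ue_i'=e_i'$ for $i=1,2$, write $\Delta(a)(e\otimes u)=\sum_k p_k\otimes q_k\in A\otimes A$, and note $\Delta(a)(e\otimes e_i')=\sum_k p_k\otimes q_ke_i'$, so that $(f'\otimes g_i')(\Delta(a)(e\otimes e_i'))=\sum_k f'(p_k)\otimes g(q_k)$ does not depend on $i$; the argument for $f$ is similar. Bilinearity over $\Bbbk$ is immediate.

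\emph{Closure.} Keep $f=f'(\_e)$, $g=g'(\_e')$. Because $T_1$ is bijective, the linear span of the elements $\Delta(p)(1\otimes q)$, $p,q\in A$, is the whole of $A\otimes A$; hence the \emph{fixed} tensor $e\otimes e'$ may be written as $\sum_i\Delta(p_i)(1\otimes q_i)$ for finitely many $p_i,q_i\in A$. Since $\Delta$ is an algebra homomorphism into $M(A\otimes A)$, this gives $\Delta(a)(e\otimes e')=\sum_i\Delta(ap_i)(1\otimes q_i)$, whence $(fg)(a)=\sum_i\mu_R[(f'\otimes g')(\Delta(ap_i)(1\otimes q_i))]=\sum_i\psi_i(ap_i)$, where $\psi_i\colon A\to R$, $\psi_i(x)=\mu_R[(f'\otimes g')(\Delta(x)(1\otimes q_i))]$, is linear. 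Therefore $fg=\sum_i\psi_i(\_p_i)\in Hom^r(A,R)$. This is the heart of the matter: although $\Delta(a)$ is merely a multiplier, expanding the fixed tensor $e\otimes e'$ via $T_1^{-1}$ lets one absorb $\Delta(a)$ into the genuine elements $\Delta(ap_i)$ and so return to $Hom^r(A,R)$.

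\emph{Associativity.} As the bilateral local units of $A$ legitimise Sweedler's notation in this setting, one writes $(fg)(a)=\sum f(a_1)g(a_2)$ and checks that $((fg)h)(a)$ and $(f(gh))(a)$ both collapse to $\sum f(a_1)g(a_2)h(a_3)$, the two bracketings agreeing by co-associativity of $\Delta$ together with associativity of $\mu_R$. To make this precise, choosing $f=f'(\_e_1)$, $g=g'(\_e_2)$, $h=h'(\_e_3)$ one reduces, exactly as in the previous steps, to the genuine element of $A\otimes A\otimes A$ given by
$$((\Delta\otimes\imath)\Delta(a))(e_1\otimes e_2\otimes e_3)=((\imath\otimes\Delta)\Delta(a))(e_1\otimes e_2\otimes e_3),$$
(the equality being co-associativity) and applies $\mu_R(\mu_R\otimes\imath)\circ(f'\otimes g'\otimes h')$ to both sides. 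This completes the plan.
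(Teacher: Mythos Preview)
Your proof is correct and follows essentially the same route as the paper: both arguments hinge on expanding the fixed tensor $e\otimes e'$ (the paper's $a\otimes b$) as $\sum_i\Delta(p_i)(1\otimes q_i)$ via bijectivity of $T_1$, and then recognising $fg=\sum_i\psi_i(\_p_i)$ with $\psi_i(x)=\mu_R[(f'\otimes g')(\Delta(x)(1\otimes q_i))]$. Your treatment is a touch more careful in that you explicitly check independence of the chosen representatives (which the paper leaves implicit), and your associativity argument is packaged more conceptually by applying $\mu_R(\mu_R\otimes\imath)(f'\otimes g'\otimes h')$ directly to the coassociativity identity evaluated on $e_1\otimes e_2\otimes e_3$, whereas the paper unwinds the same computation at length.
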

\begin{proof}
To show that the product is well defined, we start verifying that the product does not depend on the writing of the elements in  $Hom^r(A,R).$ Indeed, consider $f, \ g \in Hom^r(A,R) $ such that $g=g_1(\underline{\hspace{0.3cm}}b_1)$ and $g=g_2(\underline{\hspace{0.3cm}}b_2),$ for some $g_1,g_2\in Hom(A,R)$ and $b_1,b_2\in A$. By the regularity of $A$, $(f\otimes \imath_A)\Delta(c)\in R\otimes A$, for all $c\in A$. Then 
$$(f\otimes g_1)(\Delta(c)(1\otimes b_1))=(f\otimes g_2)(\Delta(c)(1\otimes b_2)).$$
Applying the multiplication map $\mu_{R}$ on both sides of the above equation we can define the linear map $h:A\longrightarrow R$ 
$$h(c)=\mu_R((f\otimes g_1)(\Delta(c)(1\otimes b_1)))=\mu_R((f\otimes g_2)(\Delta(c)(1\otimes b_2)))= (fg)(c),$$
for all $c\in A.$ Analogously, considering $f, \ g \in Hom^r(A,R) $ such that $f=f_1(\underline{\hspace{0.3cm}}a_1)=f_2(\underline{\hspace{0.3cm}}a_2)$, for some $f_1,f_2\in Hom(A,R)$ and $a_1,a_2\in A$, the product $fg$ does not depend on the writing of the elements in  $Hom^r(A,R).$

Moreover, $h\in Hom^r(A,R)$ since writing $f=f_1(\underline{\hspace{0.3cm}}a_1)$, $g=g_1(\underline{\hspace{0.3cm}}b_1)$ and $a_1\otimes b_1=\sum_i^n \Delta(p_i)(1 \otimes q_i)$,

$$	\begin{array}{rl}
h(c)&=\mu_{R}((f\otimes g_1)(\Delta(c)(1\otimes b_1)))\\
&= \mu_{R}((f_1 \otimes g_1)(\Delta(c)(a_1 \otimes b_1)))\\
&=\mu_R((f_1\otimes g_1)(\sum_i^n\Delta(cp_i)(1\otimes q_i)))\\
&=\sum_i^n h_i(cp_i)\\
&= (\sum_i^n h_i(\underline{\hspace{0.3cm}}p_i))(c),
\end{array}$$ 
for all $c\in A$, where $h_i:A\longrightarrow R$ are the linear maps given by
$h_i(d)=\mu_R((f_1\otimes g_1)(\Delta(d)(1\otimes q_i)))$ for all $d\in A$ and $i \in \{1, ..., n\}.$ 
	
Note that using the Sweedler notation, we can rewrite $(f(\underline{\hspace{0.3cm}} a)g(\underline{\hspace{0.3cm}} b))(c) =f(c_{(1)} a)g(c_{(2)} b)$, for all $c \in A$.

	To show the associativity of $Hom^r(A,R)$, consider $f(\underline{\hspace{0.3cm}} a), g(\underline{\hspace{0.3cm}} b)$ and $h(\underline{\hspace{0.3cm}} c)\in Hom^r(A,R)$, then
	$$\begin{array}{rcl}
(f(\underline{\hspace{0.3cm}}a) (g(\underline{\hspace{0.3cm}}b)h(\underline{\hspace{0.3cm}}c)))(d)&=&f(d_{(1)}a) (g(\underline{\hspace{0.3cm}}b)h(\underline{\hspace{0.3cm}}c))(d_{(2)})\\
&=& f(d_{(1)}a)(g(d_{(2)(1)}b)h(d_{(2)(2)}c))\\
&=& (f(d_{(1)(1)}a)g(d_{(1)(2)}b))h(d_{(2)}c)\\
&=&(f(\underline{\hspace{0.3cm}}a)g(\underline{\hspace{0.3cm}}b))(d_{(1)}) h(d_{(2)}c)\\
&=&((f(\underline{\hspace{0.3cm}}a)g(\underline{\hspace{0.3cm}}b))h(\underline{\hspace{0.3cm}} c))(d),
\end{array}$$
for all $d\in A$, that is, $f(\underline{\hspace{0.3cm}}a) (g(\underline{\hspace{0.3cm}}b)h(\underline{\hspace{0.3cm}}c))=(f(\underline{\hspace{0.3cm}}a)g(\underline{\hspace{0.3cm}}b))h(\underline{\hspace{0.3cm}} c)$.
\end{proof}

\begin{cor}\label{Hom_naodegenerado}
	The algebra $Hom^r(A,R)$ has a nondegenerate product.
\end{cor}
\begin{proof}
	Consider $f(\underline{\hspace{0.3cm}}a)g(\underline{\hspace{0.3cm}}b)=0$, for all $g(\underline{\hspace{0.3cm}}b)\in Hom^r(A,R)$. Then for all $g\in Hom(A,R)$ and $b,c\in A$, 
$$f(c_{(1)}a)g(c_{(2)}b)=\mu_R((f\otimes g)(\Delta(c)(1\otimes b)(a\otimes 1)))=0.$$

Since $\Delta(A)(1\otimes A)=A\otimes A$, 
\begin{equation}\label{produto}
f(pa)g(q)=0,
\end{equation}
for all $p,q\in A$ and $g\in Hom(A,R)$.
Note that considering $R$ any left $A$-module unital ($A\triangleright R=R$) we have that for each $y\in R$ there exists $e_y\in A$ such that $e_y\triangleright y=y$. Thus, for each $y\in R$, we defined $g_y:A\longrightarrow R$ given by $g_y(q)=q\triangleright y$, for all $q\in A$.
Directly, $g_y\in Hom(A,R)$, for all $y\in R$, because $\triangleright$ is a linear map.

Therefore, for each $y\in R$, $f(pa)y=f(pa)g_y(e_y)\stackrel{(\ref{produto})}{=}0$ and since $R$ has a nondegenerate product, $f(\underline{\hspace{0.3cm}}a)(p)=f(pa)=0$, for all $p\in A$, i. e. $f(\underline{\hspace{0.3cm}}a)=0$.

Similarly, $g(\underline{\hspace{0.3cm}}b)=0$ if $f(\underline{\hspace{0.3cm}}a)g(\underline{\hspace{0.3cm}}b)=0$, for all $f(\underline{\hspace{0.3cm}}a)\in Hom^r(A,R)$
\end{proof}

Analogously, we define the algebra $ Hom^l(A,R)=\{\sum_i f_i(a_i \underline{\hspace{0.3cm}}); \ a_i \in A, f_i \in Hom(A,R)\}$ with a nondegenerate product. Note that these algebras are not  necessarily unital, since the map $f(\underline{\hspace{0.3cm}}b)(a)=\varepsilon(ab)1_R=f(b\underline{\hspace{0.3cm}})(a)$, where $\varepsilon(b)=1_{\Bbbk}$, $a\in A$ and $f \in Hom(A,R)$, would be the natural identity for such algebras, however R has no unit. Therefore, $Hom^r(A,M(R))$ and $Hom^l(A,M(R))$ are unital algebras.

For the next result we recall the concept of a nondegenerate module. Let $V$ a left $B$-module via $\triangleright$, where $V$ is a vector space and $B$ is an algebra with a nondegenerate product. We say that $\triangleright$ is nondegenerate if the following holds: $B\triangleright x=0$ if and only if $x=0$. Similarly, for right module and bimodules. It is important to note that if $V$ is a left $B$-module algebra, then $\triangleright$ is nondegenerate.

\begin{cor}\label{cor_bimodulo} The following statements hold:
\begin{enumerate}
	\item[(i)] $Hom(A,R)$ is a nondegenerate $(Hom^l(A,R), Hom^r(A,R))-$bimodule via  $$(f(a\underline{\hspace{0.3cm}}) \triangleright g)(c)=f(ac_{(1)})g(c_{(2)}) \  \ \mbox{and} \ \ (g \triangleleft f(\underline{\hspace{0.3cm}} a))(c)=g(c_{(1)})f(c_{(2)}a);$$ 
	\item[(ii)] $Hom(A,R)$ is a nondegenerate $Hom^r(A,R)-$bimodule via  $$(f(\underline{\hspace{0.3cm}}a) \triangleright g)(c)=f(c_{(1)}a)g(c_{(2)}) \  \ \mbox{and} \ \ (g \triangleleft f(\underline{\hspace{0.3cm}} a))(c)=g(c_{(1)})f(c_{(2)}a);$$ 
	\item[(iii)] $Hom(A,R)$ is a nondegenerate $Hom^l(A,R)-$bimodule via  $$(f(a\underline{\hspace{0.3cm}}) \triangleright g)(c)=f(ac_{(1)})g(c_{(2)}) \  \ \mbox{and} \ \ (g \triangleleft f(a\underline{\hspace{0.3cm}}))(c)=g(c_{(1)})f(ac_{(2)}),$$ 
	\end{enumerate}
for all $a,c \in A$ and $f,g \in Hom(A,R)$.
\end{cor}
\begin{proof}
(i) Note that these actions independent of the written of the elements in $Hom^r(A,R)$ and $Hom^l(A,R)$. Indeed, suppose that $f_1(\underline{\hspace{0.3cm}}a_1)=f_2(\underline{\hspace{0.3cm}}a_2)$, and consider $e \in A$ such that $ea_1=a_1$ and $ea_2=a_2$. Then, for each $c\in A$,
 $$(g \triangleleft f_1(\underline{\hspace{0.3cm}} a_1) )(c)=g(c_{(1)})f_1(c_{(2)}a_1)=g(c_{(1)})f_2(c_{(2)}a_2)=(g \triangleleft f_2(\underline{\hspace{0.3cm}} a_2))(c),$$ where $\Delta(c)(1 \otimes e)=c_{(1)} \otimes c_{(2)}$. Analogously for the left action.

Now consider $f(\underline{\hspace{0.3cm}}a), g(\underline{\hspace{0.3cm}}b)\in Hom^r(A,R)$ and $h\in Hom(A,R)$, then writing $a\otimes b=\sum\limits_i^n \Delta(p_i)(1 \otimes q_i)$ and using the Sweedler notation in Theorem \ref{hom}
\begin{eqnarray*}
	(h\triangleleft f(\underline{\hspace{0.3cm}} a)g(\underline{\hspace{0.3cm}} b))(c)&=& \sum_i g(c_{(1)})\mu_{R}(f\otimes g)(\Delta(c_{(2)}p_i)(1\otimes q_i))\\
	&=& g(c_{(1)})f(c_{(2)}a)g(c_{(3)}b)\\
&=&((h\triangleleft f(\underline{\hspace{0.3cm}} a))\triangleleft g(\underline{\hspace{0.3cm}} b))(c),
\end{eqnarray*}
for all $c\in A$. The proof that $\triangleleft$ is nondegenerate is similar to demonstration of the Corollary \ref{Hom_naodegenerado}. Similarly, $Hom(A,R)$ is a nondegenerate left  $Hom^l(A,R)$-module.

 Besides that, 
\begin{eqnarray*}
((f(a\underline{\hspace{0.3cm}})\triangleright h)\triangleleft g(\underline{\hspace{0.3cm}} b))(c)&=&(f(a\underline{\hspace{0.3cm}})\triangleright h)(c_{(1)})g(c_{(2)}b)\\
&=& f(ac_{(1)})h(c_{(2)})g(c_{(3)}b)\\
&=& f(ac_{(1)})(h\triangleleft g(\underline{\hspace{0.3cm}}b))(c_{(2)})\\
&=& (f(a\underline{\hspace{0.3cm}})\triangleright (h\triangleleft g(\underline{\hspace{0.3cm}} b)))(c),
\end{eqnarray*}
for all $f(a\underline{\hspace{0.3cm}})\in Hom^l(A,R)$, $g(\underline{\hspace{0.3cm}}b)\in Hom^r(A,R)$, $h\in Hom(A,R)$ and $c\in A$. Therefore, $Hom(A,R)$ is a nondegenerate $(Hom^l(A,R), Hom^r(A,R))-$bimodule. 

The itens (ii) and (iii) are analogously.
\end{proof}

To simplify the notation we denote by $\ast^r$ and $\ast^l$, the actions of $Hom^r(A,R)$ and $Hom^l(A,R)$ on $Hom(A,R)$, respectively.

Inspired in the classical theory of Hopf algebras and by the paper \cite{bialgebra}, we present another application for the algebra $Hom^r(A,R)$ and $Hom^l(A,R)$, precisely, we relate the antipode $S$ as a convolution inverse of an element in $Hom(A,A)$. Therefore, using Corollary  \ref{cor_bimodulo} we  introduce the following definition.

\begin{defi} Let $A$ be a regular multiplier Hopf algebra and $f,g\in Hom(A,A)$. We say that $f$ is a \textit{convolution inverse of $g$} if
	\begin{enumerate}
		\item[(i)] $f\ast^r g(\underline{\hspace{0.3cm}} a)=u_a\circ\varepsilon$;
		\item[(ii)] $g(b\underline{\hspace{0.3cm}})\ast^l f=u_b\circ\varepsilon$,
	\end{enumerate}
	where $u_a,u_b:\Bbbk \longrightarrow A$ are such that $u_a(1_{\Bbbk})=a$ and $u_b(1_{\Bbbk})=b$, for all $a,b\in A$.
	\label{def_inversaconvolutiva}
\end{defi}

\begin{pro} Let $A$ be a regular multiplier Hopf algebra. The linear map $S\in Hom(A,A)$ is the antipode of $A$ if only if $S$ is a convolution  inverse of the identity map of $A$, denoted by $\imath$.
\end{pro}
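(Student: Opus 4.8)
The goal is to verify that the antipode $S$ satisfies Definition~\ref{def_inversaconvolutiva} with $g=\imath$. So fix $a\in A$. Since $A$ has bilateral local units and $S$ is bijective by regularity, there is $b\in A$ with $S(b)a=a=aS(b)$; this is the element required in the definition. The plan is then to check items (i) and (ii) separately, reducing each to one of the two defining identities of the antipode, namely $m(S\otimes\imath)(\Delta(a)(1\otimes b))=\varepsilon(a)b$ and $m(\imath\otimes S)((a\otimes 1)\Delta(b))=\varepsilon(b)a$.

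For item (i), I would compute $\big(S(\_b)\ast^r \imath(\_a)\big)(c)$ for an arbitrary $c\in A$. Using the description of the $\ast^r$-product established in Theorem~2.1 (in Sweedler form, $(f(\_x)g(\_y))(c)=f(c_1 x)g(c_2 y)$ where $\Delta(c)(1\otimes y)=c_1\otimes c_2 y$), this becomes $\mu_A\big[(S\otimes\imath)(\Delta(c)(b\otimes a))\big]$. Now one writes $b\otimes a$ in covered form, say $b\otimes a=\sum_i \Delta(p_i)(1\otimes q_i)$; applying coassociativity and the comultiplicativity of $\Delta$ one rearranges $\Delta(c)(b\otimes a)$ so that the antipode identity $m(S\otimes\imath)(\Delta(\,\cdot\,)(1\otimes q_i))=\varepsilon(\,\cdot\,)q_i$ can be applied. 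After this the expression collapses to something of the form $\varepsilon(cb)a$, which is exactly $\varepsilon(c)S(b)a\cdot(\text{something})$; using $S(b)a=a$ one gets $\varepsilon(cb)a = \varepsilon(c)a$... more carefully, one uses $(\varepsilon\otimes\imath)(\Delta(c)(1\otimes b))=cb$ together with $S(b)a=a$ to land on $\varepsilon(c)a = (u_a\circ\varepsilon)(c)$. Item (ii) is entirely parallel, working in $Hom^l(A,A)$ with the $\ast^l$-product and invoking instead $m(\imath\otimes S)((a\otimes 1)\Delta(b))=\varepsilon(b)a$ and the relation $aS(b)=a$.

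The main obstacle is bookkeeping rather than conceptual: because $A$ need not be unital and $\Delta(a)\notin A\otimes A$ in general, every step has to be justified by first inserting a covering element (local unit or a covered expansion $b\otimes a=\sum_i\Delta(p_i)(1\otimes q_i)$), and one must be careful that the Sweedler-notation manipulations — in particular sliding the antipode identity past the leg that is not covered — are legitimate. The key technical point to get right is that the covered form of $b\otimes a$ is compatible with applying $(\imath\otimes\Delta)$ or $(\Delta\otimes\imath)$ to $\Delta(c)$, which is precisely the co-associativity axiom as stated in the excerpt; so I would mirror the computation already carried out in the proof of associativity in Theorem~2.1, where the same reassociation is performed. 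Once the covering is handled correctly, the antipode axioms finish each identity in one stroke, and the two items together give that $S$ is a convolutive inverse of $\imath$.
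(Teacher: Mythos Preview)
Your overall plan is sound and would succeed, but the execution in the middle of item~(i) is muddled, and you are working harder than necessary.

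The confused step is your claim that the expression ``collapses to something of the form $\varepsilon(cb)a$, which is exactly $\varepsilon(c)S(b)a\cdot(\text{something})$''. That is not right: $\varepsilon(cb)=\varepsilon(c)\varepsilon(b)$, not $\varepsilon(c)S(b)$, and there is no reason $\varepsilon(b)=1$. Your route via the covered expansion $b\otimes a=\sum_i\Delta(p_i)(1\otimes q_i)$ \emph{does} work, but the correct chain is
\[
\mu_A\big[(S\otimes\imath)(\Delta(c)(b\otimes a))\big]
=\sum_i m(S\otimes\imath)\big(\Delta(cp_i)(1\otimes q_i)\big)
=\sum_i\varepsilon(cp_i)q_i
=\varepsilon(c)\sum_i\varepsilon(p_i)q_i,
\]
using that $\varepsilon$ is multiplicative, and then one recognizes $\sum_i\varepsilon(p_i)q_i=m(S\otimes\imath)(b\otimes a)=S(b)a=a$ by applying the antipode identity once more to the defining relation $b\otimes a=\sum_i\Delta(p_i)(1\otimes q_i)$.

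The paper's proof bypasses the covered expansion of $b\otimes a$ altogether by exploiting a fact you never invoke: $S$ is an algebra anti-homomorphism. Covering $\Delta(d)$ only on the right by $a$, one has $\Delta(d)(1\otimes a)=d_1\otimes d_2a$ with $d_1\in A$ genuinely, so
\[
(S(\_b)\ast^r\imath(\_a))(d)=S(d_1b)\,d_2a=S(b)S(d_1)d_2a=S(b)\,\varepsilon(d)a=\varepsilon(d)a.
\]
This is the whole argument. Item~(ii) is the mirror computation using $S(bd_2)=S(d_2)S(b)$ and the other antipode identity. So your approach and the paper's agree in structure (choose $b$ via local units plus bijectivity of $S$, then verify (i) and (ii)), but the paper's use of the anti-homomorphism property makes the covered-expansion bookkeeping you worry about entirely unnecessary.
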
	
\begin{proof}
	Consider $S\in Hom(A,A)$ the antipode of $A$, then 
	\begin{eqnarray*}
		(S \ast^r \imath(\underline{\hspace{0.3cm}}a))(c)&=&S(c_{(1)})c_{(2)}a\\
		&=&\varepsilon(c)a\\
		&=&(u_a\circ\varepsilon)(c),
	\end{eqnarray*}
for all $a,c\in A$.	Analogously, $(\imath(b\underline{\hspace{0.3cm}} )\ast^l S)(c)=(u_b\circ\varepsilon)(c)$, for all $b,c\in A$. Therefore, $S$ is a convolution inverse of the map $\imath$.

	Converselly, suppose that $S$ is a convolution  inverse of the map $\imath$, then 
	\begin{eqnarray*}
		\varepsilon(c)a&=&(u_a\circ\varepsilon)(c)\\
		&\stackrel{\ref{def_inversaconvolutiva}}{=}&(S\ast^r \imath(\underline{\hspace{0.3cm}} a))(c)\\
		&=& S(c_{(1)})c_{(2)}a\\
		&=& m(S\otimes \imath)(\Delta(c)(1\otimes a)),
	\end{eqnarray*}
	for all $a,c\in A$. Similarly, $m(\imath\otimes S)((b\otimes 1)\Delta(c))=\varepsilon(c)b$, for all $b,c\in A$. Therefore, the map $S$ is the antipode of the algebra $A$.
	\end{proof}

The next lemma give us a family of examples of module algebras that are important for the development of the theory presented in this work.

\begin{lem}\label{cutucahom}The algebra $Hom^r(A,R)$ is a left $A$-module algebra via
	\begin{eqnarray*}
		\triangleright :A\otimes Hom^r(A,R) &\longrightarrow &Hom^r(A,R)\\
		a\otimes f(\underline{\hspace{0.3cm}} b)&\longmapsto &a\triangleright f(\underline{\hspace{0.3cm}} b):=f(\underline{\hspace{0.3cm}} ab).
	\end{eqnarray*}
\end{lem}
\begin{proof} First of all, note that this action does not depend on the written on $Hom^r(A,R)$. Indeed, if $f_1(\underline{\hspace{0.3cm}} b_1)=f_2(\underline{\hspace{0.3cm}} b_2) \in Hom^r(A,R)$, then $f_1(\underline{\hspace{0.3cm}}ab_1)(c)=f_1(cab_1)=f_2(cab_2)=f_2(\underline{\hspace{0.3cm}} ab_2)(c)$, for all $a,c \in A$, i. e. $a\triangleright f_1(\underline{\hspace{0.3cm}} b_1)=a\triangleright f_2(\underline{\hspace{0.3cm}} b_2)$, for all $a\in A$. 

	Given $a,b\in A$, $f(\underline{\hspace{0.3cm}}c)\in Hom^r(A,R),$ 
		\begin{center}
		$a\triangleright(b\triangleright f(\underline{\hspace{0.3cm}}c))=a\triangleright f(\underline{\hspace{0.3cm}}bc)=f(\underline{\hspace{0.3cm}}abc)        =ab\triangleright f(\underline{\hspace{0.3cm}}c)$
	\end{center}		
	 and it is unital because $f(\underline{\hspace{0.3cm}}c)=e\triangleright f(\underline{\hspace{0.3cm}}c),$ where $e\in A$ such that $ec=c$.
	
	Besides that, if $a\in A$, $f(\underline{\hspace{0.3cm}}b)$, $g(\underline{\hspace{0.3cm}}c)\in Hom^r(A,R)$ and $b\otimes c=\sum\limits_i^n \Delta(p_i)(1 \otimes q_i)$,
$$	\begin{array}{rcl}
		(a\triangleright (f(\underline{\hspace{0.3cm}} b)g(\underline{\hspace{0.3cm}} c)))(d)
\vspace{0.08cm}
		&=&\mu_{R}[(f\otimes g)(\sum_i^n \Delta(dap_i)(1\otimes q_i))]\\
	\vspace{0.08cm}
		&=&\mu_{R}[(f\otimes g)(\Delta(da)(b\otimes c))]\\
\vspace{0.08cm}
	&=&f(d_{(1)}a_{(1)}b)g(d_{(2)}a_{(2)}c)\\
\vspace{0.08cm}
		&=&[f(\underline{\hspace{0.3cm}}a_{(1)}b)g(\underline{\hspace{0.3cm}}a_{(2)}c)](d)\\
\vspace{0.08cm}
		&=&[(a_{(1)}\triangleright f(\underline{\hspace{0.3cm}}b))(a_{(2)}\triangleright g(\underline{\hspace{0.3cm}}c))](d),
	\end{array}$$
	for all $d\in A$.
	Therefore, $Hom^r(A,R)$ is a left $A$-module algebra.
\end{proof}

\section{Globalization for Partial Module Algebra}

\quad Our goal in this section is to generalize the enveloping action theory presented in \cite{MunizandBatista} to the multiplier Hopf algebra context. We will always deal with left partial actions  because right partial actions are defined in a similar way. From now consider $A$ a regular multiplier Hopf algebra and $R$ an nondegenerated algebra unless another specific condition is required.

\subsection{Partial Action of Multiplier Hopf Algebras}
We started with the definition of partial action and its properties in the context of multiplier Hopf algebra. 

 \begin{defi}\label{def27} \cite{Grasiela} Let $A$ be a regular multiplier Hopf algebra and $R$ a nondegenerate algebra. A triple $(R, \cdot, \e)$ is a \emph{partial $A$-module algebra} if $\cdot$ is a linear map
 	\begin{eqnarray*}
 		\cdot: A\otimes R & \longrightarrow & R\\
 		a\otimes x & \longmapsto & a\cdot x
 	\end{eqnarray*}
 	and $\e$ is a linear map $\e: A\longrightarrow M(R)$ satisfying the following conditions, for all $a,b\in A$ and $x,y\in R$:
 	\begin{enumerate}
 		\item[(i)] $a\cdot (x(b\cdot y)) = (a_{(1)}\cdot x)(a_{(2)}b\cdot y)$;
 		
 		\vu
 		
 		\item[(ii)] $\mathfrak{e}(a)(b\cdot x)=a_{(1)}\cdot (S(a_{(2)})b\cdot x)$ and $\mathfrak{e}(A)R\subseteq A\cdot R$;
 		
 		\vu
 		
 		\item[(iii)] given $a_1,...,a_n\in A$ and  $x_1, ..., x_m\in R$ there exists $b\in A$ such that $a_ib=a_i=ba_i$ and $a_i\cdot x_j=a_i\cdot (b\cdot x_j)$, for all $1\leq i\leq n$ and $1\leq j\leq m$;
 		
 		\vu
 		
 		\item[(iv)] $A\cdot x=0$ if and only if $x=0$, that is, $\cdot$ is a \emph{nondegenerate action}.
 	\end{enumerate}
 	
 	Under these conditions, the map $\cdot$ is called a \emph{partial action} of $A$ on $R$, and we say that it is \emph{symmetric} if the following additional conditions also hold:
 	\begin{enumerate}
 		\item[(v)] $a\cdot ((b\cdot x)y)=(a_{(1)}b\cdot x)(a_{(2)}\cdot y)$;
 		
 		\vu
 		
 		\item[(vi)] $(b\cdot x)\mathfrak{e}(a)=a_{(2)}\cdot(S^{-1}(a_{(1)})b\cdot x)$;
 		
 		\vu
 		
 		\item[(vii)]$R \mathfrak{e}(A)\subseteq A \cdot R $,
 	\end{enumerate}
 	for all $x,y\in R$ and $a,b\in A$.
 \label{def_acaoparcialmulti}
 \end{defi}

\begin{pro} \cite{Grasiela}
If $A$ and $R$ are unital algebras, then the conditions above are equivalent to
	\begin{itemize}
	\item[(i)] $1_A\cdot x=x$;
	\item[(ii)]  $a\cdot(x(b\cdot y))= (a_{(1)}\cdot x)(a_{(2)}b\cdot y)$;
\item[(iii)] $a\cdot ((b\cdot x)y)=(a_{(1)}b\cdot x)(a_{(2)}\cdot y).$
\end{itemize}
\end{pro}
\begin{proof}
	Indeed, if $A$ and $R$ are unital algebras then Definition \ref{def_acaoparcialmulti} follows taking the linear map $\mathfrak{e}: A\longrightarrow M(R)=R$ given by $\mathfrak{e}(a)=a\cdot 1_R$, for all $a\in A$. Conversely, it is enough to check that $1_A\cdot x=x$, for all $x\in R$. To do this take $a, 1_A\in A$ and $x\in R$. By (iii) of Definition \ref{def_acaoparcialmulti} there exists an element $b\in A$ such that $ba=a=ab$, $b1_A=1_A=1_Ab$ and $a\cdot b\cdot x=a\cdot x$. However, $1_A$ is the identity element of $A$, hence $b=1_Ab=1_A$ and $a\cdot 1_A\cdot x=a\cdot x$. Then we have $a\cdot 1_A\cdot x=a\cdot x$, for all $a\in A$ and using (iv) of Definition \ref{def_acaoparcialmulti} we conclude $1_A\cdot x=x$, for all $x\in R$.
\end{proof}

It is immediate to check that if  $R$ is a partial $A$-module algebra, then $R$ is an $A$-module algebra if and only if $\mathfrak{e}(a)=\varepsilon(a)1_{M(R)}$, for all $a \in A$.

 \begin{pro}\cite{Grasiela}
	Assume that $R$ is an $A$-module algebra via a global action $\triangleright$ and let $L\subset R$ be a unital bilateral ideal of $R$ with identity element $1_L$. Then $L$ is a symmetric partial $A$-module algebra via $a \cdot x = 1_L(a \triangleright x)$ and $\e(a)=a\cdot 1_L$,
	for all $a\in A$ and $x\in L$.
	\label{pro_induzida}
	\end{pro}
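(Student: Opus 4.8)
The plan is to verify directly that the candidate map $\cdot$, defined by $a\cdot x = 1_L(a\triangleright x)$, satisfies all seven axioms of a symmetric partial $A$-module algebra from Definition~\ref{def_acaoparcialmulti}, using the global module algebra axioms for $\triangleright$, the fact that $L$ is a right ideal with unit $1_L$, and the existence of bilateral local units in $A$. The natural guess for the auxiliary map is $\e(a) = 1_L(a\triangleright 1_L) \in M(L)$ (more precisely, left and right multiplication by this element, which lies in $L$ since $L$ is a right ideal); one should first check this is a well-defined multiplier of $L$.

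First I would treat axiom (i). Writing $y = e\triangleright y$ for a suitable local unit $e\in A$ so that the Sweedler notation is legitimate, one computes
$a\cdot(x(b\cdot y)) = 1_L\big(a\triangleright(x\,1_L(b\triangleright y))\big) = 1_L(a_1\triangleright x)(a_2\triangleright(1_L(b\triangleright y)))$
using that $\triangleright$ is a module algebra action; then since $x\in L$ and $L$ is a right ideal, $1_L(a_1\triangleright x) = (a_1\triangleright x)$ only after reinserting $1_L$ appropriately — here one uses $1_L x = x$ and $1_L(a_2\triangleright(1_L z)) $ manipulations, together with $a_2\triangleright(1_L(b\triangleright y)) = (a_2{}_1\triangleright 1_L)(a_2{}_2 b\triangleright y)$ and coassociativity, to land on $(a_1\cdot x)(a_2 b\cdot y)$. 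The same bookkeeping, run on the other side, gives the symmetric axiom (v). For axiom (ii) I would compute $\e(a)(b\cdot x) = 1_L(a\triangleright 1_L)\,1_L(b\triangleright x)$ and compare with $a_1\cdot(S(a_2)b\cdot x) = 1_L\big(a_1\triangleright(1_L(S(a_2)b\triangleright x))\big)$; expanding the right side with the module algebra axiom and then using the antipode identity $m(\imath\otimes S\otimes\imath)$-type relation $a_1 S(a_2) = \varepsilon(a)$ in multiplier form should collapse it to the left side. Axiom (vi) is the mirror computation using $S^{-1}$, which exists by regularity. The inclusions $\e(A)R \subseteq A\cdot R$ and $R\e(A)\subseteq A\cdot R$ in (ii) and (vii) follow because $\e(a)x$ is manifestly of the form $1_L(a\triangleright(\cdots)) \in A\cdot L$.

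For axiom (iii), given $a_1,\dots,a_n\in A$ and $x_1,\dots,x_m\in L$, I would pick $b\in A$ a bilateral local unit for $a_1,\dots,a_n$; then $a_i b = a_i = b a_i$ is immediate, and $a_i\cdot(b\cdot x_j) = 1_L(a_i\triangleright(1_L(b\triangleright x_j)))$, which by the module algebra axiom and coassociativity equals $1_L(a_i{}_1\triangleright 1_L)(a_i{}_2 b\triangleright x_j)$; choosing $b$ also to act as a local unit on the finitely many elements $a_i{}_2$ appearing (legitimate since $\Delta(a_i)(1\otimes e)\in A\otimes A$), this reduces to $1_L(a_i\triangleright x_j)$ possibly after absorbing a $1_L$, i.e. to $a_i\cdot x_j$. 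Axiom (iv), nondegeneracy, follows since $A\cdot x = 0$ means $1_L(a\triangleright x)=0$ for all $a$; taking $a$ a local unit for (a preimage under $\triangleright$ of) $x$ gives $1_L x = x = 0$.

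The main obstacle I anticipate is the careful handling of the $1_L$ factors and the Sweedler leg-numbering: because $\Delta(a)\notin A\otimes A$, every use of $a_1\otimes a_2$ must be justified by pairing against an element that "covers" the comultiplication, and the idempotent $1_L$ interacts with the right-ideal structure in a way that requires repeatedly rewriting $1_L(a\triangleright(1_L z)) = 1_L(a_1\triangleright 1_L)(a_2\triangleright z)$ and then recognizing $1_L(a_1\triangleright 1_L)$ as producing (a multiplier version of) $\e(a_1)$. Getting the antipode contractions $a_1 S(a_2)=\varepsilon(a)1$ and $S^{-1}(a_1)a_2 = \varepsilon(a)1$ to apply inside these expressions — which in the multiplier setting hold only after covering — is where the proof must be written with the most care; everything else is routine verification once the conventions are fixed.
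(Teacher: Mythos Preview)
The paper does not prove this proposition; it is quoted from \cite{Grasiela} without argument. The closest thing in the present paper is Proposition~\ref{induzidaviapi}, which establishes the analogous result for an arbitrary symmetric $A$-projection $\pi$ in place of left multiplication by $1_L$, together with Example~\ref{projecao com idempotente}, which records that $\pi(y)=fy$ is a symmetric $A$-projection when $f$ is a \emph{central} idempotent. Your direct verification is exactly that argument specialized to $\pi(y)=1_L y$: the computations you sketch for (i), (ii), (iii), (iv) are correct, and your choice $\e(a)=1_L(a\triangleright 1_L)$ coincides with the multiplier produced in the proof of Proposition~\ref{induzidaviapi}.

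There is one place where your sketch is too quick. For axiom~(i) the crucial simplification is $x(b\cdot y)=x\,1_L(b\triangleright y)=x(b\triangleright y)$, which uses only $x1_L=x$ for $x\in L$. When you write ``the same bookkeeping, run on the other side, gives (v)'', the mirror step would be $(b\cdot x)y=1_L(b\triangleright x)y\stackrel{?}{=}(b\triangleright x)y$; but $L$ is only assumed to be a \emph{right} ideal, so $b\triangleright x$ need not lie in $L$ and the leading $1_L$ cannot simply be dropped. In the $A$-projection language this is precisely the symmetric identity~\eqref{Aprojecaosimet} for $\pi(y)=1_L y$, and Example~\ref{projecao com idempotente} only claims it under the additional hypothesis that $1_L$ is central. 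This is not a divergence from the paper---the paper does not address the point either, deferring to \cite{Grasiela}---but it is the one step where ``symmetric bookkeeping'' is not automatic and the argument from the cited reference is genuinely needed.
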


\subsection{Induced Partial Module Algebra}

\quad Based on the construction of induced partial action via projections given by F. Castro and G. Quadros in \cite {Glauber}, we will make the analogue for partial module algebras in the context of multiplier Hopf algebras in order to construct a globalization for these structures.

\begin{defi}\label{defprojdealgebras}
	Let $L$ be a subalgebra of $R$. A linear map  $\pi: R\longrightarrow R$ is  called of projection over $L$, if $Im{\pi}=L$ and $\pi(x)=x$ for all $x\in L$. Besides that, if $\pi$ is multiplicative, we say that $\pi$ is an algebra projection.
\end{defi}
\begin{defi}\label{Aprojecao}
	Let $R$ be a left $A$-module algebra via $\triangleright$, $L$ be a nondegenerated subalgebra of $R$ and $\pi: R\longrightarrow R$ be an algebra projection over $L$. We say that the map $\pi$ is an \textit{$A$-projection} if
	\begin{eqnarray*}\pi(a\triangleright (x(b\triangleright y)))=\pi (a\triangleright (x\pi(b\triangleright y))),\end{eqnarray*}
	for all $x,y\in L$ and $a,b\in A.$
	Besides that, we say that $\pi$ is a \textit{symmetric $A$-projection}  if it also satisfies
	\begin{eqnarray*}
		\pi(a\triangleright ((b\triangleright x)y))=\pi (a\triangleright (\pi(b\triangleright x)y)),
	\end{eqnarray*}
	for all $x,y\in L$ and $a,b\in A.$
\end{defi}
\begin{exa}\label{projecao com idempotente}
	Let $R$ be a left $A$-module algebra via $\triangleright$ and $L$ be the subalgebra of $R$ generated by a central idempotent  $f\in R$. The map
	\begin{eqnarray*}
		\pi: R & \longrightarrow & R\\
		y & \longmapsto & fy
	\end{eqnarray*}
	is a symmetric $A$-projection.
\end{exa}

\begin{exa}
	Let $A$ be a left $A$-module algebra such that $a\triangleright b=a_{(1)}bS(a_{(2)})$, for all $a,b \in A$. Then any algebra projection of $A$ on a subalgebra $B$ with nondegenerate product is a symmetric $A$-projection.
\end{exa}

It is possible to construct a symmetric partial action from a global one via symmetric $A$-projections, as can be seen in the next result.

\begin{pro}\label{induzidaviapi}
	Let $R$ be a left $A$-module algebra via $\triangleright$ and $L$ be a nondegenerate subalgebra of $R$. If the linear map $\pi: R\longrightarrow R$ is a symmetric $A$-projection over $L$, then $L$ is a symmetric partial $A$-module algebra  via $a\cdot x=\pi(a\triangleright x)$ and $\e:A\longmapsto M(L)$ is such that
	\begin{eqnarray*}
		\overline{\e(a)}(b\cdot x)&=&\pi(a_{(1)}\triangleright\pi(S(a_{(2)})b\triangleright x)),\\
		\overline{\overline{\e(a)}}(b\cdot x)&=&\pi(a_{(2)}\triangleright\pi(S^{-1}(a_{(1)})b\triangleright x)),
	\end{eqnarray*}
	for all $a,b\in A$, $x\in L,$ where $\e(a)=(\overline{\e(a)},\overline{\overline{\e(a)}})\in M(L).$ 
\end{pro}
\begin{proof}Given $a,b\in A$, $x,y\in L,$
	\begin{eqnarray*}
		a\cdot (x(b\cdot y))&=& \pi(a\triangleright(x(\pi(b\triangleright y))))\\
		&\stackrel{\ref{Aprojecao}}{=}& \pi (a\triangleright(x(b\triangleright y)))\\
		&=& \pi(a_{(1)}\triangleright x)\pi(a_{(2)}b\triangleright y)\\
		&=& (a_{(1)}\cdot x)(a_{(2)}b\cdot y),	
	\end{eqnarray*}
and, analogously, $a\cdot ((b\cdot x)y)=(a_{(1)}b\cdot x)(a_{(2)}\cdot y)$. Thus the items (i) and (v) of Definition \ref{def_acaoparcialmulti} are verified. 

Note that the map $\e$ is well defined. Indeed,
\begin{eqnarray*}
	(b\cdot x)\overline{\e(a)}(c\cdot y)&=&\pi(b\triangleright x)\pi(a_{(1)}\triangleright\pi(S(a_{(2)})c\triangleright y))\\
	&=&\pi((b\triangleright x)(a_{(1)}\triangleright\pi(S(a_{(2)})c\triangleright y)))\\
	&=&\pi(a_{(2)}\triangleright((S^{-1}(a_{(1)})b\triangleright x)\pi(S(a_{(3)})c\triangleright y)))\\
	&\stackrel{\ref{Aprojecao}}{=}&\pi(a_{(2)}\triangleright (\pi(S^{-1}(a_{(1)})b\triangleright x)\pi(S(a_{(3)})c\triangleright y)))\\
	&\stackrel{\ref{Aprojecao}}{=}&\pi(a_{(2)}\triangleright (\pi(S^{-1}(a_{(1)})b\triangleright x)(S(a_{(3)})c\triangleright y)))\\
	&=&\pi(a_{(2)}\triangleright (\pi(S^{-1}(a_{(1)})b\triangleright x))(a_{(3)}S(a_{(4)})c\triangleright y))\\
	&=&\pi(a_{(2)}\triangleright \pi(S^{-1}(a_{(1)})b\triangleright x))\pi(c\triangleright y)\\
	&=&(\overline{\overline{\e(a)}}(b\cdot x))(c\cdot y),
\end{eqnarray*}
for all $b,c\in A$, $x,y\in L.$ Therefore, $\e(a)\in M(L)$, for all $a\in A$.

Besides that, $A\cdot L= L$ since given $x\in L$,
$x=\pi(x)=\pi(e\triangleright x)=e\cdot x$. Then the items (ii), (iv), (vi) and (vii) of Definition \ref{def_acaoparcialmulti} are satisfied immediately.

For the item (iii), given $a_1,...,a_n\in A$, $x_1, ..., x_m\in L$, we choose $b\in A$ such that $x_j=b\triangleright x_j$ and $a_ib=a_i=ba_i.$ So,
	\begin{eqnarray*}
		a_i\cdot x_j =\pi(a_i \triangleright x_j)
	=\pi(a_i\triangleright \pi(b\triangleright x_j))
		=a_i\cdot(b\cdot x_j),
	\end{eqnarray*}
	for all $1\leq i\leq n,$ $1\leq j\leq m.$ Thus $L$ is a symmetric partial $A$-module algebra.
\end{proof}

In this case, the symmetric partial action $\cdot$ is called induced partial action. Note that it is essential to have a symmetric $A$-projection for the map $\e$ to be well defined.

\begin{rem}It is necessary to call attention for the fact that the Proposition \ref{pro_induzida} is a particular case of Proposition \ref{induzidaviapi}.
	\end{rem}
 
The following example illustrates the Proposition \ref{induzidaviapi}.	

\begin{exa} \label{exemplo principal} Let $A_G$ be the algebra of the functions from $G$ to $\Bbbk$ with finite support with basis $\{\delta_p\}_{p \in G}$ over $\Bbbk$, where $\delta_p(g)=\delta_{p,g}$ (the Kronecker symbol), for all $g\in G$, and $R$ the group algebra $\Bbbk G$. Suppose that $R$ is an $A_G$-module algebra via $\delta_p \triangleright h = \delta_p(h)h$, for all $p,h \in G$. Consider a finite and normal subgroup $N\neq 1_G$ of $G$, with order $|N|$  not divisible by the characteristic of $\Bbbk$ and define the subalgebra $L=f_NR$ of $R$, where $f_N=\frac{1}{|N|}(\sum\limits_{n\in N}n)$ is a central idempotent element in $R$. Thus by Example \ref{projecao com idempotente} the linear map $\pi: R \longrightarrow R$ defined by $\pi(h)=f_Nh$, for all $h\in R$ is a symmetric $A_G$-projection. Therefore, $L=f_NR$ is a symmetric partial $A_G$-module algebra given by
	\begin{eqnarray*}
		\delta_p\cdot (f_Nh)&=&	\pi(\delta_p\triangleright(f_Nh))\\
		&=&	f_N(\delta_p\triangleright(f_Nh))\\
		&=& \left\{
		\begin{array}{rl}
			\frac{1}{|N|} f_Np, & \text{if}\quad ph^{-1}\in N\\
			0, \ \ \ \  & \text{otherwise},
		\end{array} \right.
	\end{eqnarray*}
	and $\mathfrak{e}(\delta_p)=\delta_p\cdot f_N=\frac{1}{|N|}f_N$. Notice that taking $h=1_G$ and $1_G\neq p\in N$, then 
	\begin{center}
		$\varepsilon(\delta_p)f_N=\delta_{p,1_G}f_N=0$,
	\end{center}
	i.e., $\mathfrak{e}(\delta_p)\neq \varepsilon(\delta_p)f_N$. Hence, the induced partial action is not global.
	\end{exa}

\begin{exa} Let $A_G$ be the algebra of the functions from $G$ to $\Bbbk$ with finite support, defined in Example \ref{exemplo principal} and $R=Hom^r(A_G,M(A_G))$. Then $R$ is an $A_G$-module algebra given by $a\triangleright f(\underline{\hspace{0.3cm}} b)=f(\underline{\hspace{0.3cm}} ab)$. Now, we define the map
	\begin{eqnarray*}
		\theta : A_G &\longrightarrow & M(A_G)\\
		\delta_p &\longmapsto & \theta(\delta_p): G \longrightarrow \Bbbk\\
		& &\ \ \ \  q \longmapsto \theta(\delta_p)(q):=  \left\{
		\begin{array}{rl}
			0, & \text{ } p=q\\
			1,  & \text{ otherwise},
		\end{array}  \right.
	\end{eqnarray*}
	thus  $\theta(\underline{\hspace{0.3cm}} \delta_e)$, where $e=1_G$, is a central idempotent in $R=Hom^r(A_G,M(A_G))$, because the product in $A_G$ is pointwise. Thus by Example \ref{projecao com idempotente} the linear map $\pi: R \longrightarrow R$ defined by $\pi(f(\underline{\hspace{0.3cm}} b))=\theta(\underline{\hspace{0.3cm}} \delta_e)f(\underline{\hspace{0.3cm}} b)$ is a symmetric $A_G$-projection.

	Therefore, $L=\theta(\underline{\hspace{0.3cm}} \delta_e)R$ is a partial $A_G$-module algebra via $a\cdot y=\pi(a\triangleright y)$ and $\e(a)=\theta(\underline{\hspace{0.3cm}} \delta_e)(a\triangleright\theta(\underline{\hspace{0.3cm}} \delta_e))$, for all $a\in A_G$ and $y\in L$.
	
	However, in this case, $\e(a)=\varepsilon(a)\theta(\underline{\hspace{0.3cm}} \ \delta_e)$, for any $a\in A_G$, what means that the action is global.
\end{exa}

\begin{exa}
	Let $A_G$ be the algebra defined in Example \ref{exemplo principal}. Consider $A_G$ the $A_G$-comodule algebra via $\Delta$. Then $\widehat{A_G}$ is an $A_G$-module algebra via
	$$
	\varphi(\underline{\hspace{0.3cm}} \delta_{g})\triangleright \delta_h=(\imath\otimes\varphi)(\Delta(\delta_h)(1\otimes \delta_g))
	= \delta_{hg^{-1}}\varphi(\delta_g),
	$$
where $\Delta(\delta_h)(1\otimes \delta_g)=\delta_{hg^{-1}}\otimes\delta_g$, for all $\delta_g, \delta_h\in\widehat{A_G}$.

Now consider any subgroup $N$ of $G$, thus $A_N$ is a subalgebra of $A_G$ with nondegenerate product. Also, define the algebra projection $\pi:A_G \longrightarrow A_G$ such that $\pi(\delta_g)=\delta_g$ if $g\in N$ and $\pi(\delta_g)=0$ otherwise. In this case, $\pi$ is a symmetric $\widehat{A_G}$-projection. Indeed, given $\delta_{g},\delta_h\in A_G$ and $\delta_p, \delta_q\in A_N$,
\begin{eqnarray*}
\pi(\varphi(\underline{\hspace{0.3cm}} \delta_{g})\triangleright(\delta_p(\varphi(\underline{\hspace{0.3cm}} \delta_{h})\triangleright\delta_q)))&=& \hspace{-0.5cm}\pi(\varphi(\underline{\hspace{0.3cm}} \delta_{g})\triangleright \delta_p\delta_{qh^{-1}}\varphi(\delta_h))\\
&\stackrel{p=qh^{-1}}{=}&\hspace{-0.5cm} \varphi(\delta_h)\pi(\varphi(\underline{\hspace{0.3cm}} \delta_{g})\triangleright \delta_p)\\
&=& \hspace{-0.5cm}\varphi(\delta_h)\varphi(\delta_g)\pi(\delta_{pg^{-1}})\\
&=&\hspace{-0.5cm}\left\{
\begin{array}{rl}
\hspace{-0.2cm}	\varphi(\delta_h)\varphi(\delta_g)\delta_{pg^{-1}}, & g\in N,  p=qh^{-1} (h\in N)\\
	0 \hspace{1cm},  & \text{ otherwise}
\end{array}  \right.
\end{eqnarray*}
and
\begin{eqnarray*}
\pi(\varphi(\underline{\hspace{0.3cm}} \delta_{g})\triangleright(\delta_p\pi(\varphi(\underline{\hspace{0.3cm}} \delta_{h})\triangleright\delta_q)))&=& \hspace{-0.3cm}\pi(\varphi(\underline{\hspace{0.3cm}} \delta_{g})\triangleright \delta_p\pi(\delta_{qh^{-1}}))\varphi(\delta_h)\\
&\stackrel{h\in N}{=}& \hspace{-0.3cm}\varphi(\delta_h)\pi(\varphi(\underline{\hspace{0.3cm}} \delta_{g})\triangleright \delta_p\delta_{qh^{-1}})\\
&=&\hspace{-0.3cm}\left\{
\begin{array}{rl}
\hspace{-0.2cm}	\varphi(\delta_h)\varphi(\delta_g)\delta_{pg^{-1}}, & g\in N,  p=qh^{-1} (h\in N)\\
	0 \hspace{1cm},  & \text{ otherwise}.
\end{array}  \right.
\end{eqnarray*}
Then $\pi(\varphi(\underline{\hspace{0.3cm}} \delta_{g})\triangleright(\delta_p(\varphi(\underline{\hspace{0.3cm}} \delta_{h})\triangleright\delta_q)))=\pi(\varphi(\underline{\hspace{0.3cm}} \delta_{g})\triangleright(\delta_p\pi(\varphi(\underline{\hspace{0.3cm}} \delta_{h})\triangleright\delta_q)))$. The symmetry follows in an analogous way, i. e. $\pi$ is a symmetric $\widehat{A_G}$-projection. Therefore, $A_N$ is a partial $\widehat{A_G}$-module algebra by Proposition \ref{induzidaviapi}. 

Note that writing $\Delta(\delta_h)(\delta_g\otimes 1)=\delta_g\otimes\delta_{g^{-1}h}$
\begin{eqnarray*}
\e(\varphi(\underline{\hspace{0.3cm}} \delta_{g}))(\varphi(\underline{\hspace{0.3cm}} \delta_{h})\cdot\delta_p)&=& \pi(\varphi(\underline{\hspace{0.3cm}} \delta_{g})\triangleright\pi(\delta_{ph^{-1}g})\varphi(\delta_{g^{-1}h}))
\end{eqnarray*}
and $\widehat{\varepsilon}(\varphi(\underline{\hspace{0.3cm}} \delta_{g}))(\varphi(\underline{\hspace{0.3cm}} \delta_{h})\cdot\delta_p)=\varphi(\delta_g)\varphi(\delta_h)\pi(\delta_{ph^{-1}}),$ 
then if $g\notin N$ and $h,p\in N$, we have that on the one hand $\widehat{\varepsilon}(\varphi(\underline{\hspace{0.3cm}} \delta_{g}))(\varphi(\underline{\hspace{0.3cm}} \delta_{h})\cdot\delta_p)=\delta_{ph^{-1}}$ and on the other hand  $\e(\varphi(\underline{\hspace{0.3cm}} \delta_{g}))(\varphi(\underline{\hspace{0.3cm}} \delta_{h})\cdot\delta_p)=0$. Therefore, the induced partial action is not global.
\end{exa}

\subsection{Globalization for Partial Module Algebras}

In this part of the work we present under which conditions we obtain an enveloping action for a symmetric partial action of a multiplier Hopf algebra on a algebra with nondegenerate product. We begin with the notion of enveloping action in the context of Hopf algebras.

\begin{defi}\textnormal{\cite{MunizandBatista}}\label{defglobalizacaoMAHopf}
	Let $A$ be a Hopf algebra and $L$ be a partial $A$-module algebra. An \textit{enveloping action}, or a \textit{globalization}, of $L$ is a  pair $(R,\theta)$ that satisfies the following conditions:
	\begin{enumerate}
		\item [(i)] $R$ is an $A$-module algebra;
		\item [(ii)] The linear map $\theta: L\longrightarrow R$ is an algebra  monomorphism;
		\item [(iii)] The subalgebra $\theta(L)$ is a right ideal of $R$;
		\item [(iv)] The partial action over $L$ is equivalent to the induced partial action over $\theta(L)$, i.e. $\theta(a\cdot x)=a\cdot\theta(x)=\theta(1_L)(a\triangleright\theta(x))$, for all $a\in A$ and $x\in L$;
		\item [(v)] $R=A\triangleright \theta(L).$
	\end{enumerate}
\end{defi}

Notice that if the partial action is symmetric then the subalgebra $\theta(L)$ in the item (iii) of the Definition \ref{defglobalizacaoMAHopf} is a bilateral ideal of $R$.

Extending this notion to the regular multiplier Hopf algebra context, we present the following definition.

\begin{defi}\label{defglobalizacaoMA}
Let $A$ be a regular multiplier Hopf algebra and $L$ be a symmetric partial $A$-module algebra. An \textit{enveloping action}, or \textit{globalization}, of $L$ is a triple $(R,\theta,\pi)$ that satisfies the following conditions:
	\begin{enumerate}
		\item [(i)] $R$ is an $A$-module algebra;
		\item [(ii)] The linear map $\theta: L\longrightarrow R$ is an algebra monomorphism;
		\item [(iii)] The subalgebra $\theta(L)$ is a bilateral ideal of $R$;
		\item [(iv)] The linear map $\pi: R\longrightarrow R$ is a symmetric $A$-projection over $\theta(L)$ such that the partial action of $L$ is equivalent (via $\theta$) to the induced partial action over $\theta(L)$ via $\pi$, i.e,  $\theta(a\cdot x)=a\cdot\theta(x)=\pi(a\triangleright\theta(x))$,
		for all $a\in A$, $x\in L$;
		\item [(v)] $R=A\triangleright \theta(L).$
	\end{enumerate}
\end{defi}

Now we present some results and observations for the construction of globalization for partial module algebras.

\begin{defi}\label{quaseunitaria} We say that a symmetric partial $A$-module algebra $L$ is \textit{quasi unitary} if, given a finite set of elements $x_1,...,x_n\in L$, there exists $b\in A$ such that $b\cdot x_i=x_i$ and $ab\cdot x_i=a\cdot x_i$, for all $a\in A$.
\end{defi}

\begin{exa}
	Let $R$ be an algebra with nondegenerate product, $A_G$ be the algebra of the functions with finite support from $G$ to $\Bbbk$ defined in Example \ref{exemplo principal} and $N$ a finite subgroup of $G$ such that $char(\Bbbk)\nmid|N|$. We define the linear map
	\begin{eqnarray*}
		\lambda: A_G & \longrightarrow & \Bbbk\\
		\delta_g & \longmapsto & \left\{
		\begin{array}{rl}
			\frac{1}{|N|}, & \text{ } g\in N,\\
			0, & \text{ otherwise. }
		\end{array} \right.
	\end{eqnarray*}
	
	Then $R$ is a quasi unitary symmetric partial $A_G$-module algebra via $\delta_g\cdot x=\lambda(\delta_g)x$, for all $\delta_g\in A_G$ and $x\in R$. It is enough to  consider $N=\{h_1,...,h_m\}$ a finite subgroup of $G$ and $b=\displaystyle\sum_{i=1}^m\delta_{h_i}\in A_G.$
\end{exa}

\begin{exa}	Every induced partial action via symmetric $A$-projection is quasi unitary. Indeed, let $R$ be a left $A$-module algebra via $\triangleright,$ $L$ a subalgebra of $R$ with nondegenerate product and $a\cdot x=\pi(a\triangleright x)$ the induced partial action via an symmetric $A$-projection $\pi,$ for all $a\in A$ and $x\in L$. Given $x_1, ..., x_n\in L$, we consider $b\in A$ such that $b\triangleright x_i=x_i$, for all $1\leq i\leq n.$ Thus we have
	\begin{eqnarray*}
		b\cdot x_i =\pi(b \triangleright x_i)=\pi(x_i)=x_i,
	\end{eqnarray*}
	and, $a\cdot x_i =\pi(a \triangleright x_i)=\pi(a \triangleright(b\triangleright x_i))=\pi(ab\triangleright x_i)=ab\cdot x_i,$
	for all $a\in A.$ Therefore, the induced partial action is quasi unitary.
	
\end{exa}

\begin{lem} \label{imersao phi}
Let $L$ be a quasi unitary symmetric partial $A$-module algebra and consider the linear map
\begin{eqnarray*}\label{fi}
\varphi: L & \longrightarrow & Hom(A,L)\\
x & \longmapsto & \varphi(x)(a):=f_x(a)=a \cdot x.
\end{eqnarray*} Then
	\begin{enumerate}
		\item[(i)] $\varphi(x)\in Hom^r(A,L)$, for all $x\in L$;
		\item [(ii)] $\varphi$ is an algebra monomorphism on $Hom^r(A,L)$;
		\item [(iii)] $\varphi(x)(a\triangleright \varphi(y))=\varphi(x(a\cdot y))$, for all $a\in A$ and $x,y\in L$;
		\item[(iv)] $(a\triangleright \varphi(x))\varphi(y)=\varphi((a\cdot x)y)$, for all $a\in A$ and $x,y\in L$,
\end{enumerate}
where the action $\triangleright$ of $A$ on $Hom^r(A,L)$ was defined in Lemma \ref{cutucahom}.
\end{lem}

\begin{proof} (i) By Definition \ref{quaseunitaria}, for each $x\in L$, there exists an element $b \in A$ such that $x=b\cdot x$ and $ab \cdot x=a \cdot x$, for all $a\in A$. Then, writing $\varphi(x)=f_x\in Hom(A,L)$, $$\varphi(x)(a)=f_x(a)=a \cdot x=ab \cdot x= f_x(ab)=f_x(\underline{\hspace{0.3cm}}b)(a),$$ for all $a \in A$, i. e. $\varphi(x)=f_x(\underline{\hspace{0.3cm}}b)\in Hom^r(A,L) $.
	
	Note that given $x\in L$, if there exists $b,c\in A$ such that $x=b\cdot x=c\cdot x$ and $a\cdot x=ab\cdot x=ac\cdot x$, then 
	\begin{center}
		$f_x(\underline{\hspace{0.3cm}}b)(a)=ab\cdot x=ac\cdot x=f_x(\underline{\hspace{0.3cm}}c)(a),$
	\end{center}
	for all $a\in A$. Therefore, $\varphi(x)$ does not depend on the choice of the elements in Definition \ref{quaseunitaria}.
	
(ii)	Choose $b\in A$ for the set $\{x,y\}$ by Definition \ref{quaseunitaria}. Then
	\begin{eqnarray*}
		\varphi(xy)(a)	&=&a\cdot xy\\
		&=&a\cdot (x(b\cdot y))\\
		&=&(a_{(1)}\cdot x)(a_{(2)}b\cdot y)\\
		&=&(a_{(1)}b\cdot x)(a_{(2)}b\cdot y)\\
		&=&(\varphi(x)\varphi(y))(a),	
	\end{eqnarray*}
	for all $a\in A.$ Thereby, $\varphi$ is an algebra homomorphism.
	
	Besides that, if $x\in L$ is such that $\varphi(x)=0$, then $0= \varphi(x)(a)=a\cdot x=ab\cdot x, $
for every $a\in A$. In particular, if $a\in A$ is such that  $ab=b$, thus $x=b\cdot x=ab\cdot x=0,$ i. e. $\varphi$ is an injective map.
	
	(iii) Let $b\in A$ for the set $\{x,y\}.$ Thus	
	\begin{eqnarray*}
		\varphi(y)(a\triangleright\varphi(x))(c)&=&f_y(\underline{\hspace{0.3cm}}b)(a\triangleright f_x(\underline{\hspace{0.3cm}}b))(c)\\
		&=&(c_{(1)}b\cdot y)(c_{(2)}(ab)\cdot x)\\
		&=&(c_{(1)}\cdot y)(c_{(2)}a\cdot x)\\
		&=&c\cdot(y(a\cdot x))\\
		&=&\varphi(y(a\cdot x))(c),
	\end{eqnarray*}
	for all $c\in A.$ Therefore, $\varphi(y)(a\triangleright\varphi(x))=\varphi(y(a\cdot x)),$ $x,y \in L$, $a\in A.$	

(iv) By the symmetry of the partial action and taking $b\in A$ for the set $\{x,y\}$,
\begin{eqnarray*}
	((a\triangleright\varphi(x))\varphi(y))(c)&=&(f_x(\underline{\hspace{0.3cm}}ab)f_y(\underline{\hspace{0.3cm}}b))(c)\\
	&=&f_x(c_{(1)}ab)f_y(c_{(2)}b)\\
	&=&(c_{(1)}a\cdot x)(c_{(2)}\cdot y)\\
	&=&c \cdot ((a\cdot x)y)\\
	&=&\varphi((a\cdot x)y)(c),
\end{eqnarray*}
for all $c\in A$. Then $(a\triangleright\varphi(x))\varphi(y)  =\varphi((a\cdot x)y)$, for all $a\in A$, $x,y\in L.$

\end{proof}

\begin{lem}\label{produtacaoglobal}
	Let $R$ be any $A$-module algebra, $a$, $b \in A$ and $x$, $y\in R$. Then
	$$ (a\triangleright x)(b\triangleright y)=a_{(1)}\triangleright (x(S(a_{(2)})b\triangleright y)).$$
\end{lem}

\begin{proof}	
It follows directly from the definition of (global) module algebra.
\end{proof}

\begin{lem}\label{imersao phi2}
	Let $L$ be a quasi unitary symmetric partial $A$-module algebra and $\varphi: L\longrightarrow Hom^r(A,L)$ be as in Lemma \ref{imersao phi}. Define  $R=A\triangleright \varphi(L)$. Then
	\begin{enumerate}
		\item [(i)] $R$ is an $A$-submodule algebra of $Hom^r(A,L)$;
		\item [(ii)] $\varphi(L)$ is a bilateral ideal of $R$.
	\end{enumerate}
\end{lem}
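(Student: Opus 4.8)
The plan is to verify the two claims directly, relying on Lemma \ref{imersao phi} and the fact that $Hom^r(A,L)$ is an $A$-module algebra (Lemma \ref{cutucahom}). For item (i), since $R = A \triangleright \varphi(L)$ is defined as the $A$-submodule of $Hom^r(A,L)$ generated by $\varphi(L)$, it is automatically an $A$-submodule; the content is that it is closed under the convolution product $\ast^r$. A typical element of $R$ is a finite sum of terms $a \triangleright \varphi(x)$ with $a \in A$, $x \in L$, so by bilinearity it suffices to show $(a \triangleright \varphi(x)) \ast^r (b \triangleright \varphi(y)) \in R$ for $a,b \in A$ and $x,y \in L$. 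First I would use Lemma \ref{produtacaoglobal}(ii) applied in the $A$-module algebra $Hom^r(A,L)$ to rewrite $(a \triangleright \varphi(x))(b \triangleright \varphi(y)) = a_1 \triangleright \big(\varphi(x)\,(S(a_2)b \triangleright \varphi(y))\big)$. Then Lemma \ref{imersao phi}(ii) turns the inner product $\varphi(x)(S(a_2)b \triangleright \varphi(y))$ into $\varphi\big(x((S(a_2)b)\cdot y)\big)$, which lies in $\varphi(L)$; applying $a_1 \triangleright (-)$ lands us in $A \triangleright \varphi(L) = R$. One must be slightly careful that the Sweedler leg $a_2$ is covered so that $S(a_2)b$ makes sense — this is handled exactly as in the proof of Theorem 2.1, writing $a \otimes b = \sum_i \Delta(p_i)(1 \otimes q_i)$ so that all expressions stay in $A \otimes A$ before applying $S$ — so this is where the main (though routine) technical care is needed.

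For item (ii), I must show $\varphi(L)$ is a right ideal of $R$: given $x \in L$ and an arbitrary generator $a \triangleright \varphi(y) \in R$, the product $\varphi(x) \ast^r (a \triangleright \varphi(y))$ should lie in $\varphi(L)$. But this is precisely Lemma \ref{imersao phi}(ii): $\varphi(x)(a \triangleright \varphi(y)) = \varphi\big(x(a \cdot y)\big) \in \varphi(L)$, since $x(a\cdot y) \in L$. Extending by bilinearity over finite sums of generators of $R$ gives the claim. The only subtlety is that $R$ also contains elements of $\varphi(L)$ itself (the case $a = e$ a local unit), but these are covered by the same identity, so nothing extra is needed.

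The main obstacle I anticipate is not conceptual but bookkeeping: ensuring that every use of the antipode $S$ and of Sweedler notation is legitimate in the non-unital, possibly non-regular setting, i.e. that one only ever applies $S$ to covered legs. This is managed uniformly by the device already used repeatedly in the paper — replacing $a \otimes b$ by $\sum_i \Delta(p_i)(1 \otimes q_i)$ (or the analogous rewriting for the other tensor slot) before invoking $S$ — after which Lemmas \ref{imersao phi}, \ref{produtacaoglobal}, and \ref{cutucahom} apply verbatim. Given those lemmas, both parts reduce to short computations, so I would present item (i) with the one displayed chain of equalities above and item (ii) as an immediate consequence of Lemma \ref{imersao phi}(ii).
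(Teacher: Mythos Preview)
Your proposal is correct and follows essentially the same route as the paper: for (i) you invoke Lemma~\ref{produtacaoglobal}(ii) followed by Lemma~\ref{imersao phi}(ii) to land in $A\triangleright\varphi(L)$, and for (ii) you use Lemma~\ref{imersao phi}(ii) directly---exactly what the paper does. The one point the paper makes more explicit is the inclusion $\varphi(L)\subseteq R$ (needed before speaking of $\varphi(L)$ as an ideal of $R$): it writes $\varphi(x)=f_x(\_\,b)=f_x(\_\,eb)=e\triangleright\varphi(x)$ for suitable $b,e\in A$, which is the ``local unit'' observation you allude to but do not spell out.
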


\begin{proof} (i) Clearly, $R$ is an $A$-submodule of $Hom^r(A,L)$ and
		\begin{eqnarray*}
		(a\triangleright \varphi(x))(b\triangleright \varphi(y)) &\stackrel{\ref{produtacaoglobal}}{=}& a_{(1)}\triangleright (\varphi(x) (S(a_{(2)})b\triangleright \varphi(y)))\\
		&\stackrel{\ref{imersao phi}}{=}& a_{(1)}\triangleright \varphi(x(S(a_{(2)})b\cdot y))\in R,
	\end{eqnarray*}
	for all  $(a\triangleright \varphi(x)), (b\triangleright \varphi(y)) \in R=A\triangleright \varphi(L)$, i. e. $R$  is also a subalgebra. 
	
	Besides that, $R=A\triangleright \varphi(L)$ has a nondegenerate product.  Indeed, consider $(a\triangleright\varphi(x))(b\triangleright\varphi(y))=0$, for all $(a\triangleright\varphi(x))\in R$. Then
	 $$c\triangleright((a\triangleright\varphi(x))(b\triangleright\varphi(y)))=(c_{(1)}a\triangleright\varphi(x))(c_{(2)}b\triangleright\varphi(y))=0,$$
	 	 for all $a,c\in A$ and $x\in L$. Since $\Delta(A)(A\otimes 1)=A\otimes A$, thus $(p\triangleright\varphi(x))(qb\triangleright\varphi(y))=0$, for all $p,q\in A$ and $x\in L$. 
	 
	 Note that, for each $x\in L$ there exists $e_x\in A$ such that $e_x\triangleright\varphi(x)=\varphi(x)$, then 
	 $$\varphi(x)(qb\triangleright\varphi(y))\stackrel{\ref{imersao phi}}{=}\varphi(x(qb\cdot y))=0,$$
	  for all $q\in A$ and $x\in L$. Since $\varphi$ is a monomorphism and $L$ has a nondegenerate product $qb\cdot y=0,$ for all $q\in A$.
	 
	 Besides that, by Definition \ref{quaseunitaria} 
	 $$qb\cdot y=qbe\cdot y=f_y(qbe)=(b\triangleright f_y(\underline{\hspace{0.3cm}}e))(q)=(b\triangleright\varphi(y))(q),$$
	 for all $q\in A$, thus $b\triangleright\varphi(y)=0$. Analogously, using Lemma \ref{imersao phi} item (iv) we proved that $a\triangleright\varphi(x)=0$ if $(a\triangleright\varphi(x))(b\triangleright\varphi(y))=0$, for all $(b\triangleright\varphi(y))\in R$. 
	 
	 Therefore, $R$ is an $A$-submodule algebra of $Hom^r(A,L)$.

	 (ii) Observe that $\varphi(L)\subseteq R$, because given $x\in L$, by the item (i) in Lemma  \ref{imersao phi}
	\begin{center}
		$\varphi(x)=f_x(\underline{\hspace{0.3cm}}b)=f_x(\underline{\hspace{0.3cm}}eb)=e\triangleright f_x(\underline{\hspace{0.3cm}}b)=e\triangleright \varphi(x),$
	\end{center}
where $eb=b.$ And, by the items (iii) and (iv) in Lemma \ref{imersao phi}, we obtain that $\varphi(L)$ is a bilateral ideal of $R$.
\end{proof}

\begin{lem}\label{lema Aproj} In the above conditions,
	there exists a unique symmetric $A$-projection over $\varphi(L)$ defined by
	\begin{eqnarray*}
		\pi': Hom^r(A,L) & \longrightarrow & Hom^r(A,L) \\
		f & \longmapsto & \pi'(f):=\varphi(f_1(a)),
	\end{eqnarray*}
	if $f=f_1(\underline{\hspace{0.3cm}} a)$.
\end{lem}
\begin{proof} $\bullet$ The linear map $\pi'$ is well defined. Indeed, consider $f=f_1(\underline{\hspace{0.3cm}} a_1)=f_2(\underline{\hspace{0.3cm}} a_2)$ and take $e\in A$ such that $ea_1=a_1$ and $ea_2=a_2$. Then $f_1(a_1)=f_1(ea_1)=f_2(ea_2)=f_2(a_2)$, i. e. $\varphi(f_1(a_1))=\varphi(f_2(a_2))$.
	
	\vu
	
	$\bullet$ $Im\pi'=\varphi(L),$ since given $\varphi(x)\in \varphi(L),$ we have that $\varphi(x)\stackrel{\ref{quaseunitaria}}{=}\varphi(b\cdot x)=\varphi(f_x(b))=\pi'(f_x(\underline{\hspace{0.3cm}}b))$.
	
	\vu
	
	$\bullet$ $\pi'(\varphi(x))=\varphi(x)$, for all $x\in L.$
	
	In fact, $\pi'(\varphi(x))\stackrel{\ref{imersao phi}(i)}{=}\pi'(f_x(\underline{\hspace{0.3cm}}b))=\varphi(f_x(b))=\varphi(b\cdot x)\stackrel{\ref{quaseunitaria}}{=}\varphi(x)$.
	
	\vu
	
	$\bullet$ Let $f(\underline{\hspace{0.3cm}}a),g(\underline{\hspace{0.3cm}}c)\in Hom^r(A,L)$, $a\otimes c= \sum_{i}^{n} \Delta(p_i)(1\otimes q_i)$ and $ep_i=p_i,$ for all $i\in \{1, \ldots, n\}$,
	$$\begin{array}{rcl}
		\pi'(f(\underline{\hspace{0.3cm}}a))\pi'(g(\underline{\hspace{0.3cm}}c))&=&\varphi(f(a)g(c))\\
		\vspace{0.08cm}
		&=&\varphi(\mu_L(f\otimes g)(a\otimes c))\\
				\vspace{0.08cm}
		&=&\varphi(\mu_L((f\otimes g)(\Delta(p_i)(1\otimes q_i))))\\
		\vspace{0.08cm}
		&=&\varphi(\mu_L((f\otimes g)(\Delta(e)(a\otimes c))))\\

		&=&\varphi((f(\underline{\hspace{0.3cm}}a)g(\underline{\hspace{0.3cm}}c))(e))\\
		\vspace{0.08cm}
		&\stackrel{\ref{hom}}{=}& \sum_{i}^{n} \varphi(h_i(\underline{\hspace{0.3cm}}p_i)(e))\\
				\vspace{0.08cm}
		&=& \sum_{i}^{n} \varphi(h_i(ep_i))\\
				\vspace{0.08cm}
		&=& \sum_{i}^{n}\varphi(h_i(p_i))\\
				\vspace{0.08cm}
		&=& \sum_{i}^{n} \pi'(h_i(\underline{\hspace{0.3cm}}p_i))\\
				\vspace{0.08cm}
		&=&\pi'(f(\underline{\hspace{0.3cm}}a)g(\underline{\hspace{0.3cm}}c)).
	\end{array} $$
	 Thus $\pi'$ is an algebra homomorphism.
	
	\vu
	
	$\bullet$ $\pi'$ is a symmetric $A$-projection.
	\begin{eqnarray*}
		\pi'(a\triangleright (\varphi(y)\pi'(c\triangleright\varphi(x))))&=&\pi'(a\triangleright (\varphi(y)\pi'(c\triangleright f_x(\underline{\hspace{0.3cm}}b))))\\
		&=&\pi'(a\triangleright (\varphi(y)\pi'(f_x(\underline{\hspace{0.3cm}}cb))))\\
		&=&\pi'(a\triangleright (\varphi(y)\varphi(f_x(cb))))\\
		&=&\pi'(a\triangleright (\varphi(y)\varphi(cb\cdot x)))\\
		&=&\pi'(a\triangleright (\varphi(y(c\cdot x))))\\
		&\stackrel{\ref{imersao phi}}{=}&\pi'(a\triangleright (\varphi(y)(c\triangleright\varphi(x)))),
	\end{eqnarray*}
	for all $a,c\in A$, $x,y\in L$ and $b\in A$ such that $b\cdot x=x$ and $cb\cdot x=c\cdot x,$ by Definition \ref{quaseunitaria}. Analogously,  	
		$$\pi'(a\triangleright (\pi'(c\triangleright\varphi(x))\varphi(y)))
=\pi'(a\triangleright ((c\triangleright\varphi(x))\varphi(y))),$$
	for all $a,c\in A$, $x,y \in L.$ Therefore, $\pi'$ is a symmetric $A$-projection over $\varphi(L).$
\end{proof}

%
%
%
%

\begin{thm}[Globalization] \label{teoGlobalizacaoMA} Let $L$ be a quasi unitary symmetric partial $A$-module algebra and consider the linear maps $\varphi$ as in Lemma \ref{imersao phi} and $\pi'$ as in Lemma \ref{lema Aproj}. Then $(R, \varphi,\pi)$ is a globalization of  $L$, where $R=A\triangleright\varphi(L)$ and $\pi=\pi'|_R.$
\end{thm}
\begin{proof}
	Notice that if we use Lemmas \ref{cutucahom}, \ref{imersao phi} and \ref{imersao phi2}, it is enough to verify the item (iv) of Definition \ref{defglobalizacaoMA}.
	
	In Lemma \ref{lema Aproj} we have shown the existence of an symmetric $A$-projection $\pi'$ over $\varphi(L)$ such that $\pi'(f(\underline{\hspace{0.3cm}}a)) = \varphi(f(a))$, for every $f(\underline{\hspace{0.3cm}}a)\in Hom^r(A,L) $. 
	
	Now, to show that the partial action over $L$ is equivalent (via $\varphi$) to the induced partial action on  $\varphi(L)$ via $\pi'$, by Definition \ref{quaseunitaria}, consider $b\in A$ such that $b\cdot x=x$ and $ab\cdot x=a\cdot x,$ for all $a\in A.$ Then,  $a\cdot \varphi(x) = \pi'(a\triangleright \varphi(x))
		=\pi'(f_x(\underline{\hspace{0.3cm}}ab))
		=\varphi(f_x(ab))
		=\varphi(a\cdot x)$.
Finally, for $\pi=\pi'|_{R}$ the result follows.
\end{proof}

The Globalization Theorem gives us the construction of an enveloping action, which we will call the \emph{standard enveloping action} $ (R, \varphi, \pi) $ of $ L $. 

As in the theory of Hopf algebras, \cite{MunizandBatista}, if we consider the globalization of induced partial action, it will not always coincide with the original global action.

Next, we will show that  the standard enveloping action is unique unless isomorphism, following the same ideas of the authors in \cite{MunizandBatista}. To do so, the next result will be crucial.

\begin{lem}\label{lema auxiliar} Let $(\overline{R},\theta,\overline{\pi})$ be another enveloping action of $L$. Then for all $a\in A$, $x,y\in L,$
	$$\theta(x)(a\triangleright\theta(y))=\theta(x(a\cdot y)).$$
\end{lem}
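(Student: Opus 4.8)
The plan is to exploit the defining property (iv) of the enveloping action $(\overline{R},\theta,\overline{\pi})$, namely that the partial action on $L$ is recovered via $\theta(a\cdot x)=\overline{\pi}(a\triangleright\theta(x))$, together with the fact that $\theta(L)$ is a right ideal of $\overline{R}$ and that $\overline{\pi}$ is a symmetric $A$-projection over $\theta(L)$. The key observation is that since $\theta(L)$ is a right ideal, the product $\theta(x)(a\triangleright\theta(y))$ already lies in $\theta(L)$, so applying $\overline{\pi}$ to it does nothing; this lets me insert $\overline{\pi}$ freely and then use the $A$-projection identity of Definition \ref{Aprojecao} to move $\overline{\pi}$ inside.

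The main steps, in order, are as follows. First, choose (by quasi-unitarity of $L$, Definition \ref{quaseunitaria}) an element $b\in A$ with $b\cdot y=y$ and $ab\cdot y=a\cdot y$ for all $a\in A$, so that $\theta(y)=\theta(b\cdot y)=\overline{\pi}(b\triangleright\theta(y))$. Second, write
$$\theta(x)(a\triangleright\theta(y))=\theta(x)\big(a\triangleright\overline{\pi}(b\triangleright\theta(y))\big),$$
and use that $\theta(x)\in\theta(L)$ so the left multiplication keeps us inside $\theta(L)$; hence the whole expression equals $\overline{\pi}$ of itself. Third, apply the $A$-projection property $\overline{\pi}(a'\triangleright(z\,\overline{\pi}(b'\triangleright w)))=\overline{\pi}(a'\triangleright(z(b'\triangleright w)))$ from Definition \ref{Aprojecao} (with $a'=e$ a local unit acting as identity on the relevant elements, $z=\theta(x)$, $b'=a$, $w=b\triangleright\theta(y)$, absorbing $ab$) to remove the inner $\overline{\pi}$, obtaining $\overline{\pi}\big(\theta(x)(ab\triangleright\theta(y))\big)$. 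Fourth, use that $\theta$ is an algebra homomorphism and that $\theta(a\cdot y)=\overline{\pi}(a\triangleright\theta(y))$ together with the right-ideal property to identify $\overline{\pi}(\theta(x)(ab\triangleright\theta(y)))$ with $\theta(x)\,\theta(ab\cdot y)=\theta(x\,(ab\cdot y))=\theta(x\,(a\cdot y))=\theta(x(a\cdot y))$, where the choice of $b$ is used in $ab\cdot y=a\cdot y$.

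The step I expect to be the main obstacle is the careful bookkeeping in applying Definition \ref{Aprojecao}: that identity is stated for $\pi(a\triangleright(x(b\triangleright y)))=\pi(a\triangleright(x\pi(b\triangleright y)))$ with the outer $a\triangleright(-)$ present, so I must first insert a local unit $e$ acting as the identity on $\theta(x)$, on $\theta(y)$, and on the various acted-upon elements (legitimate since $A$ has bilateral local units and $\theta(L)$ lies in an $A$-module algebra), rewrite $\theta(x)(a\triangleright w)=e\triangleright(\theta(x)(a\triangleright w))$, match the pattern, and only then strip off $e$ at the end. A secondary subtlety is ensuring that every intermediate expression genuinely lies in $\theta(L)$ (so that applying or removing $\overline{\pi}$ is harmless), which follows from $\theta(L)$ being a right ideal for the products of the form $\theta(x)\cdot(\text{anything})$ but must be checked where an outer $a\triangleright(-)$ is applied — there one uses that $\overline{\pi}$ has image $\theta(L)$ so the outermost $\overline{\pi}$ is what guarantees membership.
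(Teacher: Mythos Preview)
Your overall strategy --- use that $\theta(L)$ is a right ideal to insert $\overline{\pi}$, then manipulate --- is sound, but the execution is more complicated than needed and Step~3 has a bookkeeping gap.

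The paper's proof is essentially one line: since $\theta(x)(a\triangleright\theta(y))\in\theta(L)$ by the right-ideal property, it is fixed by $\overline{\pi}$; now use that $\overline{\pi}$ is \emph{multiplicative} (by Definition~\ref{defprojdealgebras} an ``algebra projection'' is by definition multiplicative, and by Definition~\ref{Aprojecao} an $A$-projection is in particular an algebra projection) to obtain $\overline{\pi}(\theta(x))\,\overline{\pi}(a\triangleright\theta(y))=\theta(x)\,\theta(a\cdot y)=\theta(x(a\cdot y))$. You appear to have overlooked that multiplicativity of $\overline{\pi}$ is already part of the hypotheses, and instead reached for the $A$-projection identity of Definition~\ref{Aprojecao}, which is not needed here.

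Your Step~3 as written does not match the $A$-projection identity: that identity has the shape $\overline{\pi}(a'\triangleright(z\,\overline{\pi}(b'\triangleright w)))=\overline{\pi}(a'\triangleright(z(b'\triangleright w)))$ with $z,w\in\theta(L)$, but after your Step~2 the inner factor is $a\triangleright\overline{\pi}(b\triangleright\theta(y))$, with the outermost ``$a\triangleright$'' sitting \emph{outside} the $\overline{\pi}$, so the pattern does not apply. Moreover, your proposed $w=b\triangleright\theta(y)$ need not lie in $\theta(L)$: quasi-unitarity gives $b\cdot y=y$, i.e.\ $\overline{\pi}(b\triangleright\theta(y))=\theta(y)$, not $b\triangleright\theta(y)=\theta(y)$. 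If you wish to avoid multiplicativity and go through Definition~\ref{Aprojecao}, drop the auxiliary $b$ altogether and apply the identity with $z=\theta(x)$, $b'=a$, $w=\theta(y)$ (and $a'=e$ a local unit for both $\theta(x)(a\triangleright\theta(y))$ and $\theta(x(a\cdot y))$): this takes you directly from $\overline{\pi}\bigl(e\triangleright(\theta(x)(a\triangleright\theta(y)))\bigr)$ to $\overline{\pi}\bigl(e\triangleright(\theta(x)\,\overline{\pi}(a\triangleright\theta(y)))\bigr)=\overline{\pi}\bigl(e\triangleright(\theta(x)\,\theta(a\cdot y))\bigr)=\theta(x(a\cdot y))$. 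Finally, note that your Step~4 as stated is circular: asserting $\overline{\pi}(\theta(x)(ab\triangleright\theta(y)))=\theta(x)\,\theta(ab\cdot y)$ is precisely the lemma with $ab$ in place of $a$, unless you are tacitly using multiplicativity of $\overline{\pi}$ --- in which case the entire detour through $b$ and local units is superfluous.
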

\begin{proof}
	It follows by Definition \ref{defglobalizacaoMA} that
	\begin{eqnarray*}
		\theta(x)(a\triangleright\theta(y))&=&\overline{\pi}(\theta(x)(a\triangleright\theta(y)))\\
		&=&\overline{\pi}(\theta(x))\overline{\pi}(a\triangleright\theta(y))\\
		&=&\theta(x)\theta(a\cdot y)\\
		&=&\theta(x(a\cdot y)),
	\end{eqnarray*}
	for all $a\in A$, $x,y\in L.$
\end{proof}

\begin{pro}\label{phiepimorf}Let $(\overline{R},\theta,\overline{\pi})$ be another enveloping action of $L$. Then the linear map
	\begin{eqnarray*}
		\Phi: (\overline{R},\theta,\overline{\pi}) & \longrightarrow & (R, \varphi,\pi) \\
		\sum_{i}a_i\triangleright\theta(x_i) & \longmapsto & \sum_{i}a_i\triangleright\varphi(x_i),
	\end{eqnarray*}
	is an epimorphism of $A$-module algebras.
\end{pro}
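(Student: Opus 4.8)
The map $\Phi$ is prescribed on the generating set $\{a\triangleright\theta(x) : a\in A,\ x\in L\}$ of $\overline{R}=A\triangleright\theta(L)$ by $a\triangleright\theta(x)\mapsto a\triangleright\varphi(x)$, and the first (and main) task is to check that it is \emph{well defined}, i.e.\ that $\sum_i a_i\triangleright\theta(x_i)=0$ in $\overline{R}$ forces $\sum_i a_i\triangleright\varphi(x_i)=0$ in $R$. The idea is to produce, for each enveloping action, a linear map back to $L$ that ``recovers the coefficients'': using the projection and the monomorphism, one checks that $\overline{\pi}(a\triangleright\theta(x)) = \theta(a\cdot x)$, so applying $\overline{\pi}$ gives $\theta(\sum_i a_i\cdot x_i)$ and, since $\theta$ is injective, $\sum_i a_i\cdot x_i$ is determined by the element $\sum_i a_i\triangleright\theta(x_i)$. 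This alone is not enough (the partial action is degenerate on $L$ in the sense that $a\cdot x$ does not see all of $a$), so instead I would test against an arbitrary $b\in A$: for $z=\sum_i a_i\triangleright\theta(x_i)$ one has $b\triangleright z = \sum_i (ba_i)\triangleright\theta(x_i)$, hence $\overline{\pi}(b\triangleright z) = \theta\bigl(\sum_i (ba_i)\cdot x_i\bigr)$, and $z=0\Rightarrow \sum_i (ba_i)\cdot x_i = 0$ for all $b\in A$. Running the identical computation in $R$ with $\varphi$ in place of $\theta$ (using $\pi'(a\triangleright\varphi(x))=\varphi(a\cdot x)$ from Lemma \ref{lema Aproj}/Theorem \ref{teoGlobalizacaoMA}) shows $\pi'(b\triangleright \Phi(z)) = \varphi\bigl(\sum_i(ba_i)\cdot x_i\bigr)=0$ for all $b\in A$; since $R=A\triangleright\varphi(L)$ every element of $R$ lies in the image of $\pi'$ after hitting it with a suitable local unit, and one concludes $\Phi(z)=0$. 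This establishes that $\Phi$ is a well-defined linear map; I expect this verification to be the crux of the argument.

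**Multiplicativity.** Once $\Phi$ is well defined it is immediate that it is $A$-linear, since on generators $\Phi(b\triangleright(a\triangleright\theta(x))) = \Phi((ba)\triangleright\theta(x)) = (ba)\triangleright\varphi(x) = b\triangleright\Phi(a\triangleright\theta(x))$. For multiplicativity, it suffices to compute on products of generators. Using Lemma \ref{produtacaoglobal}(ii) in $\overline{R}$ and Lemma \ref{lema auxiliar},
\[
(a\triangleright\theta(x))(b\triangleright\theta(y)) = a_1\triangleright\bigl(\theta(x)(S(a_2)b\triangleright\theta(y))\bigr) = a_1\triangleright\theta\bigl(x(S(a_2)b\cdot y)\bigr),
\]
so $\Phi$ sends this to $a_1\triangleright\varphi\bigl(x(S(a_2)b\cdot y)\bigr)$; running the same two identities in $R$ (Lemma \ref{produtacaoglobal}(ii) for $R\subseteq Hom^r(A,L)$ and Lemma \ref{imersao phi}(ii)) shows this equals $(a\triangleright\varphi(x))(b\triangleright\varphi(y)) = \Phi(a\triangleright\theta(x))\,\Phi(b\triangleright\theta(y))$. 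One must be a little careful that the Sweedler-notation manipulations are legitimate in the multiplier setting, but each step is covered by an identity already proved in the excerpt, so this is routine.

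**Surjectivity.** Finally, $\Phi$ is surjective essentially by construction: $R = A\triangleright\varphi(L)$, and every generator $a\triangleright\varphi(x)$ of $R$ equals $\Phi(a\triangleright\theta(x))$ by definition; since $\Phi$ is linear, its image contains the spanning set of $R$ and hence all of $R$. Therefore $\Phi$ is a well-defined surjective homomorphism of $A$-module algebras, which is the assertion.
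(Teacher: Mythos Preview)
Your proposal is correct and follows essentially the same route as the paper: well-definedness via $\overline{\pi}(b\triangleright z)=\theta(\sum_i ba_i\cdot x_i)$, multiplicativity via Lemma~\ref{produtacaoglobal}(ii), Lemma~\ref{lema auxiliar}, and Lemma~\ref{imersao phi}(ii), and surjectivity by construction. One small point: your final clause in the well-definedness step (``every element of $R$ lies in the image of $\pi'$ after hitting it with a suitable local unit'') is not quite the right justification, since $\pi'$ maps onto $\varphi(L)$, not onto $R$. The paper's conclusion is more direct: $\Phi(z)\in Hom^r(A,L)$, and $\Phi(z)(a)=\sum_i aa_i\cdot x_i=0$ for every $a\in A$ gives $\Phi(z)=0$ immediately. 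Your computation $\pi'(b\triangleright\Phi(z))=\varphi(\sum_i ba_i\cdot x_i)=0$ is equivalent to this once you note that $\pi'(b\triangleright\Phi(z))=\varphi(\Phi(z)(b))$ and $\varphi$ is injective, so the argument is fine---just tighten the last sentence.
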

\begin{proof}
The map  $\Phi$ is well defined. Indeed, let $x\in R$ such that $x=\sum_{i}a_i\triangleright\theta(x_i)=0,$ we will show that $\Phi(x)=0$. Note that for all $a\in A$,
	$$
	\begin{array}{rl}
0 = \overline{\pi}(a\triangleright (\sum_i a_i\triangleright \theta(x_i)))= \overline{\pi}(\sum_i aa_i\triangleright \theta(x_i)) = \theta(\sum_i aa_i \cdot x_i),
	\end{array}$$
		i.e. $\sum_i aa_i\cdot x_i=0$, since $\theta$ is a monomorphism.
	
	Thus if we consider $b\in A$ such that $b\cdot x_i=x_i$ and $cb\cdot x_i=c\cdot x_i$, for all $c\in A$, we have	
$$	\begin{array}{rcl}
		\Phi(x)(a) &=& \Phi(\sum_i a_i\triangleright \theta(x_i))(a) \\
		&=& (\sum_i a_i\triangleright \varphi(b\cdot x_i))(a)\\
		&=& \sum_i f_{x_i}(aa_ib)\\
		&=& \sum_i aa_i\cdot x_i\\
		&=&0,
	\end{array}$$
for all $a\in A$, what implies that $\Phi(x)=0.$ By definition, $\Phi$ is an epimorphism of $A$-modules. And, for all $a,c\in A$ and $y,z\in L$	
	\begin{eqnarray*}
		\Phi((a\triangleright \theta(y))(c\triangleright \theta(z))) &\stackrel{\ref{produtacaoglobal}}{=}& \Phi(a_{(1)}\triangleright (\theta(y)(S(a_{(2)})c\triangleright \theta(z))))\\
		&\stackrel{\ref{lema auxiliar}}{=}& \Phi(a_{(1)}\triangleright \theta(y(S(a_{(2)})c\cdot z)))\\
		&=& a_{(1)}\triangleright \varphi(y(S(a_{(2)})c\cdot z))\\
		&\stackrel{\ref{imersao phi}}{=}& a_{(1)}\triangleright (\varphi(y)(S(a_{(2)})c\triangleright \varphi(z)))\\
		&\stackrel{\ref{produtacaoglobal}}{=}& (a\triangleright \varphi(y))(c\triangleright \varphi(z))\\
		&=& \Phi(a\triangleright \theta(y))\Phi(c\triangleright \theta(z)),
		\end{eqnarray*}
	then $\Phi$ is an epimorphism of $A$-module algebras.
\end{proof}

\begin{defi}
	An enveloping action $(R,\theta,\pi)$ is \textit{minimal} if, for every  $A$-submodule $M$ of $R$ that satisfies $\pi(M)=0$, it implies $M=0$. 
\end{defi}

\begin{pro}
	The standard enveloping action $(R, \varphi, \pi)$ is minimal.
\end{pro}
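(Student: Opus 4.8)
The plan is to unwind the definitions directly. Let $M$ be an $A$-submodule of $R$ with $\pi(M)=0$; I must show $M=0$, and since $M\subseteq R\subseteq Hom^r(A,L)$ it suffices to prove that every $m\in M$ satisfies $m(c)=0$ for all $c\in A$. As $R=A\triangleright\varphi(L)$, I would fix an expression $m=\sum_i a_i\triangleright\varphi(x_i)$ with $a_i\in A$ and $x_i\in L$. Writing $\varphi(x_i)=f_{x_i}(\_\,b_i)$, where $b_i$ is the element associated to $x_i$ by Definition \ref{quaseunitaria} (so $b_i\cdot x_i=x_i$ and $(a b_i)\cdot x_i=a\cdot x_i$ for all $a\in A$) and $f_{x_i}(a)=a\cdot x_i$, and using the action $a\triangleright f(\_\,b)=f(\_\,ab)$ from Lemma \ref{cutucahom}, I get $m=\sum_i f_{x_i}(\_\,a_i b_i)$. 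Hence for every $c\in A$,
$$m(c)=\sum_i f_{x_i}(c a_i b_i)=\sum_i (c a_i b_i)\cdot x_i=\sum_i (c a_i)\cdot x_i.$$

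The key point is then to use that $M$ is a \emph{submodule}: for each $c\in A$ the element $c\triangleright m=\sum_i (c a_i)\triangleright\varphi(x_i)=\sum_i f_{x_i}(\_\,c a_i b_i)$ still lies in $M$, so $\pi(c\triangleright m)=0$. Since $\pi=\pi'|_R$ and $\pi'(f(\_\,a))=\varphi(f(a))$ (Lemma \ref{lema Aproj}), linearity gives
$$0=\pi(c\triangleright m)=\sum_i\varphi\bigl((c a_i b_i)\cdot x_i\bigr)=\varphi\Bigl(\sum_i (c a_i)\cdot x_i\Bigr),$$
and because $\varphi$ is a monomorphism (Lemma \ref{imersao phi}) we conclude $\sum_i (c a_i)\cdot x_i=0$ for every $c\in A$. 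Comparing with the displayed formula for $m(c)$, this says exactly $m(c)=0$ for all $c$, so $m=0$; as $m\in M$ was arbitrary, $M=0$, which is the minimality of $(R,\varphi,\pi)$.

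I expect the only subtle point to be realizing that $\pi(M)=0$ applied to $m$ alone is too weak — it only yields $\sum_i a_i\cdot x_i=0$, the value of $m$ at a local unit — so one genuinely needs the $A$-module structure of $M$ to pull the scalar $c$ inside and kill $\sum_i(c a_i)\cdot x_i$ for all $c\in A$, which is precisely what encodes the vanishing of the function $m$ in $Hom^r(A,L)$. Everything else is the routine bookkeeping with $\varphi$, $\pi'$ and $\triangleright$ already recorded in Lemmas \ref{cutucahom}, \ref{imersao phi} and \ref{lema Aproj}.
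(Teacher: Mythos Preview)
Your proof is correct and follows essentially the same route as the paper: compute $m(c)=\sum_i (ca_i)\cdot x_i$, then use that $c\triangleright m\in M$ so $0=\pi(c\triangleright m)=\varphi\bigl(\sum_i (ca_i)\cdot x_i\bigr)$, and conclude by injectivity of $\varphi$. The paper merely phrases the reduction as ``it suffices to treat a cyclic $A$-submodule'' and uses a single common $b$ rather than individual $b_i$, but the computation and the key observation (that one needs $\pi(c\triangleright m)=0$ for all $c$, not just $\pi(m)=0$) are identical to yours.
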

\begin{proof}
	It is enough to show the result for $M=\langle \sum_i a_i\triangleright \varphi(x_i) \rangle= \langle\sum_i Aa_i\triangleright \varphi(x_i)\rangle$, a cyclic $A$-submodule of $R$. Thus
$$	\begin{array}{rcl}
		\varphi(\sum_i (a_i \triangleright \varphi(x_i))(c))
		\vspace{0.08cm}
		 &\stackrel{\ref{imersao phi}}{=}&\varphi(\sum_i a_i\triangleright f_{x_i}(\underline{\hspace{0.3cm}}b)(c))\\
				\vspace{0.08cm}
		&=&\varphi(\sum_i f_{x_i}(ca_ib))\\
				\vspace{0.08cm}
		&=& \pi(\sum_if_{x_i}(\underline{\hspace{0.3cm}}ca_ib))\\
				\vspace{0.08cm}
				&=& \pi(\sum_i ca_i\triangleright \varphi(x_i))\\
				\vspace{0.08cm}
		&=& 0.
			\end{array}$$
	Since $\varphi$ is a monomorphism, it follows that	$\sum_i (a_i \triangleright \varphi(x_i))(c)=0$ for all $c\in A$, what implies $M=\langle \displaystyle\sum_i a_i\triangleright \varphi(x_i) \rangle =0.$		
\end{proof}

\begin{thm} Any two minimal enveloping actions are isomorphic as $A$-module algebras.
\end{thm}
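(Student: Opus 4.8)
The plan is to show that any minimal enveloping action $(\overline{R},\theta,\overline{\pi})$ of $L$ is isomorphic, as an $A$-module algebra compatible with the projections, to the standard enveloping action $(R,\varphi,\pi)$ constructed in Theorem \ref{teoGlobalizacaoMA}; since ``isomorphic'' is an equivalence relation, this yields the statement for any pair of minimal enveloping actions. The natural candidate for the isomorphism is the map $\Phi$ of Proposition \ref{phiepimorf}, which is already known to be an epimorphism of $A$-module algebras, so the whole problem reduces to proving that $\Phi$ is injective when $(\overline{R},\theta,\overline{\pi})$ is minimal.

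First I would set $M=\ker\Phi$ and observe that $M$ is an $A$-submodule of $\overline{R}$, since $\Phi$ is a morphism of $A$-modules and $\overline{R}=A\triangleright\theta(L)$. The key computation is to show $\overline{\pi}(M)=0$. Take $x=\sum_i a_i\triangleright\theta(x_i)\in M$, so $\Phi(x)=\sum_i a_i\triangleright\varphi(x_i)=0$ in $R$. Evaluating this identity in $Hom^r(A,L)$: choosing $b\in A$ with $b\cdot x_i=x_i$ and $cb\cdot x_i=c\cdot x_i$ for all relevant indices and all $c\in A$ (using quasi-unitarity, or rather the defining property of $\varphi$), we get $\sum_i(a\triangleright\varphi(x_i))(c)=\sum_i f_{x_i}(ca_ib)=\sum_i ca_i\cdot x_i=0$ for all $a,c\in A$, so in particular $\sum_i a_ia\cdot x_i = 0$... more carefully $\sum_i (c a_i)\cdot x_i = 0$ for all $c\in A$. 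Then compute $\overline{\pi}(a\triangleright x)=\overline{\pi}(\sum_i aa_i\triangleright\theta(x_i))=\theta(\sum_i aa_i\cdot x_i)=0$ using condition (iv) of Definition \ref{defglobalizacaoMA}; since this holds for all $a\in A$ and $\overline{\pi}$ is linear, together with $\overline{R}=A\triangleright\theta(L)$ and local units one deduces $\overline{\pi}(x)=0$, hence $\overline{\pi}(M)=0$.

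By minimality of $(\overline{R},\theta,\overline{\pi})$ this forces $M=0$, so $\Phi$ is injective, hence an isomorphism of $A$-module algebras. It remains to check that $\Phi$ intertwines the projections, i.e. $\pi\circ\Phi=\Phi\circ\overline{\pi}$, which follows from condition (iv): on a generator $a\triangleright\theta(x)$ one has $\overline{\pi}(a\triangleright\theta(x))=\theta(a\cdot x)$, so $\Phi(\overline{\pi}(a\triangleright\theta(x)))=\varphi(a\cdot x)=\pi(a\triangleright\varphi(x))=\pi(\Phi(a\triangleright\theta(x)))$; both sides are linear and extend over all of $\overline{R}$. Thus $\Phi$ is an isomorphism of enveloping actions, and transitivity gives the theorem.

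The main obstacle I anticipate is the passage ``$\overline{\pi}(a\triangleright x)=0$ for all $a\in A$ implies $\overline{\pi}(x)=0$''. This is not automatic — it requires that the element $x$ itself, lying in $\overline{R}=A\triangleright\theta(L)$, can be written (or approximated in the algebraic sense, via local units) as $e\triangleright x'$ so that $\overline{\pi}(x)=\overline{\pi}(e\triangleright x')$ sits among the expressions already shown to vanish. One must be careful here that the $A$-module structure on $\overline{R}$ is a genuine (global) module algebra action, so $x = e\triangleright x$ for a suitable local unit $e$ relative to the finitely many $a_i$ and the finitely many $x_i$ appearing in $x$; then $\overline{\pi}(x)=\overline{\pi}(e\triangleright x)=\theta(\sum_i ea_i\cdot x_i)=0$. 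Making this local-unit bookkeeping precise, and confirming it meshes with the quasi-unitary choice of $b$ used to define $\varphi$, is where the real care is needed; the rest is formal.
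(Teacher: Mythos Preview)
Your proof is correct and follows essentially the same approach as the paper: both show that $\ker\Phi$ is an $A$-submodule annihilated by $\overline{\pi}$ and then invoke minimality to conclude $\Phi$ is injective. The paper resolves your anticipated obstacle more directly by immediately choosing $c$ to be a local unit for the finitely many $a_i$ (so $ca_i=a_i$), which turns $\sum_i ca_i\cdot x_i=0$ into $\overline{\pi}(x)=\theta(\sum_i a_i\cdot x_i)=0$ in one step, bypassing your detour through $\overline{\pi}(a\triangleright x)$.
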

\begin{proof}
	 We consider $(\overline{R}, \theta, \overline{\pi})$ a minimal enveloping action and $\Phi$ given by Lemma \ref{phiepimorf}. Let
	$x=\sum_i a_i\triangleright \theta (x_i)\in \overline{R}$
	such that $\Phi(x)=0.$ Then $0 = \sum_i (a_i\triangleright \varphi(x_i))(c)= \sum_i ca_i\cdot x_i,$ for all $c\in A.$ In particular, we can choose an element $c\in A$ such that $ca_i=a_i$, so
	$0 =\theta(\sum_i a_i \cdot x_i)= \overline{\pi}(\sum_i a_i \triangleright \theta(x_i)).$
	By the minimality of  $\overline{R}$ follows that $x=\sum_i a_i\triangleright \theta (x_i)=0,$ what means that $\Phi$ is injective. Therefore, by Lemma \ref{phiepimorf}, we obtain that $\Phi$ is an isomorphism of $A$-module algebras.
\end{proof}

\begin{rem} If $A$ and $L$ have identity $1_A$ and $1_L$, respectively, we have that $Hom^r(A,L)=Hom(A,L).$ In this case, we define	$\pi'(f)=\varphi(f(1_A))$
	and	it follows that
	\begin{eqnarray*}
		\pi'(a\triangleright\varphi(x)) &=& \varphi((a\triangleright \varphi(x))(1_A))\\
		&=&\varphi(f_x(\underline{\hspace{0.3cm}}a)(1_A))\\
		&=&\varphi(f_x(a))\\
		&=& \varphi(1_L(a\cdot x))\\
		&\stackrel{\ref{imersao phi}}{=}& \varphi(1_L) (a\triangleright \varphi(x)),
	\end{eqnarray*}
	then $\pi'(a\triangleright \varphi(x))= \varphi(1_L)(a\triangleright \varphi(x)),$
	for all $a\in A$ and $x\in L$.
	This means that in the case of symmetric partial actions Definitions \ref{defglobalizacaoMA} and \ref{defglobalizacaoMAHopf} coincide, and more, we obtain a generalization of the standard enveloping action,  constructed in \cite{MunizandBatista}, for the case that $A$ is a Hopf algebra and  $R$ an algebra with  identity.
\end{rem}

\subsection{Globalization for Partial Group Actions}

In this section we aim to establish an one-to-one correspondence between globalizable partial actions of groups on nonunital algebras R and symmetric partial actions of a multiplier Hopf algebra dual on R. We start mentioning some definitions about partial group actions introduced in \cite{exel2}.

\begin{defi}\label{defpartialgroupact}
	 Let $G$ be a group with identity element $1_G$ and let $R$ be a ring. A \textit{partial action} $\alpha$ of $G$ on $R$ is a collection of bilateral ideals $R_g\subseteq R$ and	ring isomorphisms $\alpha_g: R_{g^{-1}}\longrightarrow R_{g}$ such that:
	
	 \begin{enumerate}
	 	\item [(i)] $R_{1_G}= R$ and $\alpha_{1_G}$ is the identity map of $R$;
	\item [(ii)] $R_{(gh)^{-1}} \supseteq \alpha_h^{-1}(R_h \cap R_{g^{-1}})$;
	\item [(iii)] $\alpha_g\circ\alpha_h(x)= \alpha_{gh}(x)$ for each $x\in  \alpha_h^{-1}(R_h \cap R_{g^{-1}})$.
\end{enumerate}
The conditions (ii) and (iii) mean that the isomorphism $\alpha_{gh}$ is an extension of the isomorphism
$\alpha_g\circ\alpha_h$. Moreover, it is easily see that (ii) can be replaced by a “stronger looking”
	condition:
	\begin{enumerate}
	\item [(ii')] $\alpha_g(R_{g^{-1}}\cap R_h)=R_g\cap R_{gh}$, for all $g, h\in G$.
	\end{enumerate}
\end{defi}

\begin{defi} An action $\beta$ of a group $G$ on a ring $S$ is said to be a \textit{globalization},
or an \textit{enveloping action}, for the partial action $\alpha$ of $G$ on a ring $R$ if there exists a monomorphism $\varphi:R\longrightarrow S$ such that:
\begin{enumerate}
	\item[(i)] $\varphi(R)$ is an ideal in S;
\item[(ii)] $S =\sum_{g\in G}\beta_g(\varphi(R))$;
\item[(iii)] $\varphi(R_g)=\varphi (R)\cap\beta_g(\varphi(R))$;
\item[(iv)] $\varphi\circ\alpha_g =\beta_g\circ\varphi$ on $R_{g^{-1}}$.
\end{enumerate}
\end{defi}

Besides that the globalization $(\beta,S)$ of a partial action $(\alpha, R)$ is called unique if for any other globalization $(\beta',S')$ of $(\alpha,R)$ there exists an isomorphism of rings $\phi:S\longrightarrow S'$ such that $\beta'_{g}\circ \phi=\phi\circ\beta_g$, for every $g\in G$.
	
The next result extends the globalization theorem obtained in \cite{exel2} for partial actions on rings with unit.

\begin{thm}\textnormal{\cite{Micha}} Let $\alpha$ be a partial action of a group $G$ on a left $s$-unital ring $R$. Then $\alpha$ admits a globalization if and only if the following two conditions are satisfied:
	\begin{enumerate}
	\item[(i)] $R_g$ is a left $s$-unital ring for every $g\in G$;
	\item[(ii)] For each $g\in G$ and $x\in R$ there exists a multiplier $\gamma_g(x)\in M(R)$ such that
	$R\gamma_g(x)\subseteq R_g$ and $\gamma_g(x)$, restricted to $R_g$ as a right multiplier, is $\alpha_{g^{-1}}V_x\alpha_g$.
	\end{enumerate}
	Moreover, if such a globalization exists, it is unique, and the ring under the global action is left $s$-unital.
\end{thm}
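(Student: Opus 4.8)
The plan is to prove the two directions separately and then deal with uniqueness at the end; the heavy lifting is in the "if" direction, where we must actually build the enveloping ring $S$ and the global action $\beta$. For the "only if" direction, I would start from a globalization $(\beta,S)$ with monomorphism $\varphi:R\to S$ and transport everything through $\varphi$. Identifying $R$ with $\varphi(R)$, an ideal of $S$, condition (iii) gives $R_g = R\cap\beta_g(R)$, so $R_g$ is an intersection of an ideal with a (global-)$G$-translate of an ideal, hence an ideal of $R$. To see $R_g$ is left $s$-unital: given $x\in R_g$, write $x=\beta_g(y)$ with $y\in R_{g^{-1}}$; pick a left local unit $u\in R$ for $y$ (so $y=uy$), and observe $\beta_g(u)x=\beta_g(uy)=\beta_g(y)=x$ with $\beta_g(u)\in\beta_g(R)$; one then multiplies on the left by a local unit for $\beta_g(u)$ inside $R$ and massages to land inside $R_g$, using that $S$ is left $s$-unital (a fact we will also establish in the other direction, so logically we should prove "if" first). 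For condition (ii), given $g\in G$ and $x\in R$, the element $\gamma_g(x)$ is forced: as a right multiplier it should be $V_x$ conjugated by $\beta_g$, i.e. $r\mapsto \beta_{g^{-1}}(\beta_g(r)x)$ for $r\in S$; the point is to check this sends $R$ into $R_g$ (it does, because $\beta_g(r)x\in R$ as $R$ is an ideal, and then apply $\beta_{g^{-1}}$ and intersect), and that its restriction to $R_g$ agrees with $\alpha_{g^{-1}}V_x\alpha_g$, which is immediate from $\varphi\circ\alpha_g=\beta_g\circ\varphi$.

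For the "if" direction I would follow the Dokuchaev--Del Rio--Sim\'on construction adapted to the $s$-unital setting. Form the left $R$-module $F=\bigoplus_{g\in G} R$ (or the set of functions $G\to R$ with finite support), let $G$ act on $F$ by translation $(\beta_h\xi)(g)=\xi(h^{-1}g)$, and define $\varphi:R\to F$ by $\varphi(x)(g)=\alpha_{g^{-1}}(\text{the piece of }x\text{ lying in }R_g)$ — here condition (i), left $s$-unitality of each $R_g$, is what lets us make sense of "the piece of $x$ in $R_g$" via local units, and condition (ii), the existence of the multipliers $\gamma_g(x)$, is exactly what is needed to define a compatible multiplication on $F$ turning it into a ring (or to define the multiplication on the subring $S:=\sum_{g}\beta_g(\varphi(R))$ generated by the translates of $\varphi(R)$). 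Concretely one declares the product on $S$ using the formulas $(\beta_g\varphi(x))\cdot(\beta_h\varphi(y))$ expanded via the $\gamma$'s; associativity and well-definedness are the routine-but-lengthy verifications. Then one checks: $\varphi$ is an injective ring homomorphism (injectivity uses left $s$-unitality of $R=R_{1_G}$); $\varphi(R)$ is an ideal of $S$ (this is where $\gamma_g(x)\in M(R)$ with $R\gamma_g(x)\subseteq R_g$ does its job); $S=\sum_g\beta_g(\varphi(R))$ by construction; $\varphi(R_g)=\varphi(R)\cap\beta_g(\varphi(R))$ by unwinding the definitions; and $\varphi\circ\alpha_g=\beta_g\circ\varphi$ on $R_{g^{-1}}$. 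Finally $S$ is left $s$-unital: an element of $S$ is a finite sum of translates $\beta_{g_i}\varphi(x_i)$; choosing a single local unit $u\in R$ with $u x_i = x_i$ for all $i$ and for enough auxiliary elements, the element $\sum$-of-translates of $\varphi(u)$ (suitably orthogonalized over the finitely many $g_i$ appearing) serves as a left local unit.

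For uniqueness, given two globalizations $(\beta,S)$ and $(\beta',S')$, define $\phi:S\to S'$ on generators by $\phi(\beta_g\varphi(x))=\beta'_g\varphi'(x)$ and show it is well-defined, bijective, multiplicative and $G$-equivariant. Well-definedness is the crux: if $\sum_i\beta_{g_i}\varphi(x_i)=0$ in $S$ one must deduce $\sum_i\beta'_{g_i}\varphi'(x_i)=0$ in $S'$; the argument is to apply, for each $h\in G$, the "component extraction" available in any globalization — multiply by a local unit and use $\varphi(R_h)=\varphi(R)\cap\beta_h(\varphi(R))$ together with $\varphi\circ\alpha=\beta\circ\varphi$ — to show the vanishing is equivalent to a family of identities among the $\alpha_g(x_i)$ inside $R$, which are symmetric in $S$ and $S'$. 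Equivariance and multiplicativity are then formal.

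\medskip
\noindent\textbf{Main obstacle.} The real work, and the step I expect to be most delicate, is constructing the ring structure on $S$ (equivalently on $F$) in the "if" direction: one must show that the multiplication defined piecewise through the multipliers $\gamma_g(x)$ is associative and independent of the local units chosen, and crucially that $\varphi(R)$ comes out as a two-sided ideal — this is precisely where hypothesis (ii) is unavoidable, and where the absence of a global identity in $R$ forces careful bookkeeping with $s$-unital local units in place of the idempotent $1_L$ that made the unital case in \cite{exel2} easy.
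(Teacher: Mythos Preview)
The paper does not prove this theorem at all: it is quoted verbatim from Dokuchaev--Del R\'io--Sim\'on \cite{Micha} as background, with no proof supplied, and is used only as a benchmark against which the authors' multiplier-Hopf globalization (Theorems \ref{bijside1} and the converse following it) is compared. So there is no ``paper's own proof'' to compare against; your sketch is essentially an outline of the original argument in \cite{Micha}, not of anything in this paper.

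That said, a word of caution about your ``if'' direction. Your description of $\varphi(x)(g)$ as $\alpha_{g^{-1}}(\text{the piece of }x\text{ in }R_g)$ is not how the construction in \cite{Micha} actually proceeds, and it is not clear this can be made precise: $x$ need not decompose into pieces lying in the various $R_g$, and left $s$-unitality of $R_g$ does not by itself produce a canonical projection $R\to R_g$. The actual construction uses the multipliers $\gamma_g(x)$ from hypothesis (ii) directly to define the product on $S$ (more or less by declaring $\beta_g(\varphi(x))\cdot\varphi(y)=\varphi(y\gamma_g(x))$ and extending), rather than passing through any coordinate description of $\varphi$. Your ``main obstacle'' paragraph correctly identifies that this is where the work lies, but the preceding paragraph's description of $\varphi$ would lead you astray if you tried to make it literal.
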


Let $R$  be an $s$-unital algebra with nondegenarate product, $G$ an any group and $\hat{A}_G=\{\varphi(\underline{\hspace{0.2cm}}\delta_g); \delta_g\in A_G\}.$

\begin{thm}\label{bijside1} If $\alpha$ is a globalizable partial action of $G$ on $R$ such that:
	\begin{enumerate}
	\item[(1)] $R_g$ has nondegenerate product for every $g\in G$;
    \item[(2)]There exists $\sigma_g\in M(R)$, for every $g\in G$, such that
    \begin{enumerate}
    	\item [(i)] $\sigma_g$ is a central idempotent in $M(R);$
    	\item [(ii)] $\alpha_g(\sigma_{g^{-1}}\sigma_h)=\sigma_g\sigma_{gh}$, for all $h\in G;$
    	\item [(iii)] $\alpha_g(x)=\alpha_g(x)\sigma_g$ for all $x\in R_{g^{-1}}$;
    	\item [(iv)] $R\sigma_g\subseteq R_g,$
        \end{enumerate}
		\end{enumerate}
	then $R$ is a symmetric partial left $\hat{A}_G$-module algebra via $\varphi(\underline{\hspace{0.2cm}}\delta_g)\cdot x=\alpha_g(x\sigma_{g^{-1}})$ and $\e: \hat{A}_G \longrightarrow M(R)$ such that $\e(\varphi(\underline{\hspace{0.2cm}}\delta_g))=\sigma_{g}$, for all $g\in G$ and $x\in R$.
	\end{thm}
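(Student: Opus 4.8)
The plan is to verify the seven axioms (i)--(vii) of Definition \ref{def_acaoparcialmulti} for the map $\varphi(\underline{\hspace{0.2cm}}\delta_g)\cdot x=\alpha_g(x\sigma_{g^{-1}})$, using throughout that $\hat{A}_G$ is spanned by the orthogonal idempotents $\varphi(\underline{\hspace{0.2cm}}\delta_g)$ and that, by the hypotheses on $\sigma_g$, each $\sigma_g$ is a central idempotent of $M(R)$ with $R\sigma_g\subseteq R_g$. First I would record the basic compatibility: since $R_g$ is an ideal with nondegenerate product and $R\sigma_g\subseteq R_g$, the expression $x\sigma_{g^{-1}}$ lies in $R_{g^{-1}}$ so that $\alpha_g(x\sigma_{g^{-1}})$ makes sense, and by (2)(iii) one has $\alpha_g(x\sigma_{g^{-1}})=\alpha_g(x\sigma_{g^{-1}})\sigma_g$. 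The key algebraic identity to extract early is the ``multiplicativity on products'' statement: for $x,y\in R$,
\begin{align*}
\varphi(\underline{\hspace{0.2cm}}\delta_g)\cdot\big(x\,(\varphi(\underline{\hspace{0.2cm}}\delta_h)\cdot y)\big)
&=\alpha_g\big(x\,\alpha_h(y\sigma_{h^{-1}})\,\sigma_{g^{-1}}\big),
\end{align*}
and one must rewrite the right-hand side using the cocycle-type relation (2)(ii) and the fact that $\alpha_g$ is an isomorphism $R_{g^{-1}}\to R_g$ together with centrality of the $\sigma$'s, so that it becomes $\big(\varphi(\underline{\hspace{0.2cm}}\delta_g)\cdot x\big)\big((\varphi(\underline{\hspace{0.2cm}}\delta_g)\varphi(\underline{\hspace{0.2cm}}\delta_h))\cdot y\big)$. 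Here one uses that in $\hat{A}_G$ the coproduct of $\varphi(\underline{\hspace{0.2cm}}\delta_g)$ is the ``sum'' $\sum_{uv=g}\delta_u\otimes\delta_v$, so that $a_1\otimes a_2b$ for $a=\varphi(\underline{\hspace{0.2cm}}\delta_g)$, $b=\varphi(\underline{\hspace{0.2cm}}\delta_h)$ collapses to the single term $\delta_{gh^{-1}}\otimes\delta_h$; this is what turns the two-sided partial-action bookkeeping into the single composite $\alpha_{gh^{-1}}\circ(\text{mult by }\alpha_h(\cdots))$. Axiom (v) is the mirror image and follows by the same computation reading $\alpha_g$ on the left factor.

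Next I would construct the multiplier $\mathfrak{e}$. Since $S(\varphi(\underline{\hspace{0.2cm}}\delta_g))$ corresponds to $\varphi(\underline{\hspace{0.2cm}}\delta_{g^{-1}})$ (the antipode of $A_G$ sends $\delta_p\mapsto\delta_{p^{-1}}$, and this transports to $\hat{A}_G$), the defining relation $\mathfrak{e}(a)(b\cdot x)=a_1\cdot(S(a_2)b\cdot x)$ with $a=\varphi(\underline{\hspace{0.2cm}}\delta_g)$, $b=\varphi(\underline{\hspace{0.2cm}}\delta_h)$ forces, after the same collapse of the coproduct, $\mathfrak{e}(\varphi(\underline{\hspace{0.2cm}}\delta_g))(\varphi(\underline{\hspace{0.2cm}}\delta_h)\cdot x)=\alpha_{gh^{-1}}\big(\alpha_{hg^{-1}}\big(\alpha_h(x\sigma_{h^{-1}})\sigma_{gh^{-1}}\big)\big)$, which by (2)(ii)--(iii) and centrality simplifies to $\sigma_g\,\big(\varphi(\underline{\hspace{0.2cm}}\delta_h)\cdot x\big)$. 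Thus $\mathfrak{e}(\varphi(\underline{\hspace{0.2cm}}\delta_g))$ should be defined as left (and, by symmetry, right) multiplication by the central idempotent $\sigma_g\in M(R)$; one checks $\mathfrak{e}(\hat{A}_G)R\subseteq \hat{A}_G\cdot R$ because $\sigma_g x=\sigma_g(x\sigma_{g^{-1}})\cdots$ — more precisely $\sigma_g x\in R\sigma_g\subseteq R_g$, and any element of $R_g$ is $\alpha_g$ of something in $R_{g^{-1}}$, hence lies in $\varphi(\underline{\hspace{0.2cm}}\delta_g)\cdot R$ after absorbing the extra $\sigma$. Conditions (vi) and (vii) are obtained the same way with $S^{-1}$ in place of $S$; since $S$ on $\hat{A}_G$ is an involution ($S^{-1}=S$), the symmetric axioms reduce to the already-established ones.

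For axiom (iii) (local units), given finitely many $\varphi(\underline{\hspace{0.2cm}}\delta_{g_i})$ and $x_j$, one uses $s$-unitality of $R$: pick $u\in R$ with $u x_j=x_j$ for all $j$ — wait, $s$-unitality gives $x_j\in Rx_j$, i.e.\ a possibly different unit per element, so instead pick $e=\sum_{i}\varphi(\underline{\hspace{0.2cm}}\delta_{g_i})$ in $\hat{A}_G$, which is an idempotent satisfying $e\,\varphi(\underline{\hspace{0.2cm}}\delta_{g_i})=\varphi(\underline{\hspace{0.2cm}}\delta_{g_i})=\varphi(\underline{\hspace{0.2cm}}\delta_{g_i})\,e$, and verify $\varphi(\underline{\hspace{0.2cm}}\delta_{g_i})\cdot(e\cdot x_j)=\varphi(\underline{\hspace{0.2cm}}\delta_{g_i})\cdot x_j$ using orthogonality of the $\delta$'s and the product identity from the first paragraph. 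Axiom (iv), nondegeneracy, follows because if $\varphi(\underline{\hspace{0.2cm}}\delta_g)\cdot x=0$ for all $g$, then in particular $\varphi(\underline{\hspace{0.2cm}}\delta_{1_G})\cdot x=\alpha_{1_G}(x\sigma_{1_G})=x\sigma_{1_G}$; one needs $\sigma_{1_G}$ to act as the identity multiplier on $R$, which should come from (2)(ii) with $g=h=1_G$ forcing $\sigma_{1_G}$ idempotent and $R_{1_G}=R$ together with (iv) giving $R\sigma_{1_G}=R$, hence (by nondegeneracy of $R$ and centrality) $\sigma_{1_G}=1_{M(R)}$, so $x=0$.

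The main obstacle I expect is the bookkeeping in the first paragraph: correctly transporting the comultiplication, counit and antipode of $A_G$ to the dual $\hat{A}_G$ and checking that all the ``Sweedler sums'' for $\varphi(\underline{\hspace{0.2cm}}\delta_g)$ genuinely collapse to single terms indexed by group elements, so that the multiplier-Hopf-algebra axioms (i), (ii), (v), (vi) become precisely the cocycle identities (2)(ii)--(iii) for $\sigma$. Once that dictionary is set up cleanly, each individual verification is a short computation with central idempotents and the isomorphisms $\alpha_g$; the symmetry axioms are then free from $S^{-1}=S$ on $\hat{A}_G$.
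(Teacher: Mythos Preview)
Your overall strategy (verify Definition~\ref{def_acaoparcialmulti} item by item, with $\mathfrak{e}(\varphi(\underline{\hspace{0.2cm}}\delta_g))=\sigma_g$) is the paper's strategy, but you have misidentified the Hopf structure of $\hat{A}_G$, and this breaks several of your computations. The dual $\hat{A}_G$ is \emph{not} spanned by orthogonal idempotents with coproduct $\sum_{uv=g}\delta_u\otimes\delta_v$: that is the description of $A_G$ itself. Since $\varphi$ is the left integral $\varphi(f)=\sum_{p}f(p)$, the functional $\varphi(\underline{\hspace{0.2cm}}\delta_g)$ is evaluation at $g$, and one has $\hat{A}_G\cong\Bbbk G$ with $\varphi(\underline{\hspace{0.2cm}}\delta_g)\,\varphi(\underline{\hspace{0.2cm}}\delta_h)=\varphi(\underline{\hspace{0.2cm}}\delta_{gh})$, $\hat\Delta(\varphi(\underline{\hspace{0.2cm}}\delta_g))=\varphi(\underline{\hspace{0.2cm}}\delta_g)\otimes\varphi(\underline{\hspace{0.2cm}}\delta_g)$, and $\hat S(\varphi(\underline{\hspace{0.2cm}}\delta_g))=\varphi(\underline{\hspace{0.2cm}}\delta_{g^{-1}})$. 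So for $a=\varphi(\underline{\hspace{0.2cm}}\delta_g)$ and $b=\varphi(\underline{\hspace{0.2cm}}\delta_h)$ one gets $a_1\otimes a_2b=\varphi(\underline{\hspace{0.2cm}}\delta_g)\otimes\varphi(\underline{\hspace{0.2cm}}\delta_{gh})$, not $\delta_{gh^{-1}}\otimes\delta_h$; the target of axiom~(i) is therefore $(\varphi(\underline{\hspace{0.2cm}}\delta_g)\cdot x)(\varphi(\underline{\hspace{0.2cm}}\delta_{gh})\cdot y)$, and this is exactly what the chain of identities using (2)(i)--(iii) and Definition~\ref{defpartialgroupact} yields (as in the paper).

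This misidentification also invalidates your treatment of item~(iii). In $\hat{A}_G\cong\Bbbk G$ the elements $\varphi(\underline{\hspace{0.2cm}}\delta_{g_i})$ are not idempotent and your $e=\sum_i\varphi(\underline{\hspace{0.2cm}}\delta_{g_i})$ is \emph{not} a local unit for them. The correct (and trivial) choice is $b=\varphi(\underline{\hspace{0.2cm}}\delta_{1_G})$, the identity of $\Bbbk G$; then $b\cdot x=\alpha_{1_G}(x\sigma_{1_G})=x$ and item~(iii) is immediate. For this last equality one needs $\sigma_{1_G}=1_{M(R)}$: your proposed derivation from (2)(ii) and $R\sigma_{1_G}\subseteq R$ does not give it, but (2)(iii) with $g=1_G$ yields $x=\alpha_{1_G}(x)=\alpha_{1_G}(x)\sigma_{1_G}=x\sigma_{1_G}$ for all $x\in R$, hence $\sigma_{1_G}=1_{M(R)}$. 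Once the $\Bbbk G$ structure is in place, your remarks on (ii), (iv), (vi), (vii) (including $\hat S^{-1}=\hat S$) are correct and match the paper's proof.
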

\begin{proof}
	We need to check the items of Definition \ref{def27}.\\
	$\bullet$ $\varphi(\underline{\hspace{0.2cm}}\delta_g)\cdot(x(\varphi(\underline{\hspace{0.2cm}}\delta_h)\cdot y))=(\varphi(\underline{\hspace{0.2cm}}\delta_g)_{(1)}\cdot x)(\varphi(\underline{\hspace{0.2cm}}\delta_g)_{(2)}\varphi(\underline{\hspace{0.2cm}}\delta_h)\cdot y).$
	
	Indeed,
	\begin{eqnarray*}
	\varphi(\underline{\hspace{0.2cm}}\delta_g)\cdot(x(\varphi(\underline{\hspace{0.2cm}}\delta_h)\cdot y))
	    &=&\alpha_g(x(\alpha_h(y\sigma_{h^{-1}}))\sigma_{g^{-1}})\\
		&\stackrel{(i)}{=}&\alpha_g(x\sigma_{g^{-1}})\alpha_g(\alpha_h(y\sigma_{h^{-1}})\sigma_{g^{-1}})\\
		&\stackrel{(iii)}{=}&\alpha_g(x\sigma_{g^{-1}})\alpha_g(\alpha_h(y\sigma_{h^{-1}})\sigma_h\sigma_{g^{-1}})\\
		&\stackrel{(ii)}{=}&\alpha_g(x\sigma_{g^{-1}})\alpha_g(\alpha_h(y\sigma_{h^{-1}})\alpha_h(\sigma_{h^{-1}}\sigma_{(gh)^{-1}}))\\
		&\stackrel{\ref{defpartialgroupact}}{=}&\alpha_g(x\sigma_{g^{-1}})\alpha_{gh}(y\sigma_{h^{-1}}\sigma_{(gh)^{-1}})\\
		&\stackrel{(i)}{=}&\alpha_g(x\sigma_{g^{-1}})\alpha_{gh}(y\sigma_{(gh)^{-1}})\alpha_{gh}(\sigma_{(gh)^{-1}}\sigma_{h^{-1}})\\
		&\stackrel{(ii)}{=}&\alpha_g(x\sigma_{g^{-1}})\alpha_{gh}(y\sigma_{(gh)^{-1}})\sigma_{(gh)}\sigma_{ghh^{-1}}\\
		&\stackrel{(iii)}{=}&\alpha_g(x\sigma_{g^{-1}})\alpha_{gh}(y\sigma_{(gh)^{-1}})\\
		&=&(\varphi(\underline{\hspace{0.2cm}}\delta_g)\cdot x)(\varphi(\underline{\hspace{0.2cm}}\delta_{gh})\cdot y)\\
		&=&(\varphi(\underline{\hspace{0.2cm}}\delta_g)_{(1)}\cdot x)(\varphi(\underline{\hspace{0.2cm}}\delta_g)_{(2)}\varphi(\underline{\hspace{0.2cm}}\delta_h)\cdot y),	
	\end{eqnarray*}
	for all $x,y\in R$, $\varphi(\underline{\hspace{0.2cm}}\delta_g),\varphi(\underline{\hspace{0.2cm}}\delta_h)\in \hat{A}_G.$
	
	$\bullet$ First we observe that $R=\hat{A}_G\cdot R$, since
$\varphi(\underline{\hspace{0.2cm}}\delta_{1_G})\cdot x=\alpha_{1_G}(x\sigma_{1_G})\stackrel{\ref{defpartialgroupact}}{=}\alpha_{1_G}(x)=x,$ for all $x\in R$. Then $\sigma_g(x)\in R=\hat{A}_G\cdot R$ since $\sigma_g\in M(R)$ and $x\in R$, that is, $\sigma_{g}R\subseteq\hat{A}_G\cdot R$. Besides that,
 	\begin{eqnarray*}
			\varphi(\underline{\hspace{0.2cm}}\delta_g)_{(1)}\cdot(\hat{S}(\varphi(\underline{\hspace{0.2cm}}\delta_g)_{(2)})\varphi(\underline{\hspace{0.2cm}}\delta_h)\cdot x)&=&\varphi(\underline{\hspace{0.2cm}}\delta_g)\cdot(\varphi(\underline{\hspace{0.2cm}}\delta_{g^{-1}h})\cdot x)\\
			&=&\alpha_g(\alpha_{g^{-1}h}(x\sigma_{(g^{-1}h)^{-1}})\sigma_{g^{-1}})\\
			&=&\alpha_h(x\sigma_{h^{-1}})\sigma_{h}\sigma_{g}\\
			&=&\alpha_h(x\sigma_{h^{-1}})\sigma_{g}\\
			&=&\e(\varphi(\underline{\hspace{0.2cm}}\delta_g))(\varphi(\underline{\hspace{0.2cm}}\delta_h)\cdot x),	
		\end{eqnarray*}
		for all $x\in R$ and $\varphi(\underline{\hspace{0.2cm}}\delta_g),\varphi(\underline{\hspace{0.2cm}}\delta_h)\in \hat{A}_G.$ Thus $\e(\varphi(\underline{\hspace{0.2cm}}\delta_g))(\varphi(\underline{\hspace{0.2cm}}\delta_h)\cdot x)=\varphi(\underline{\hspace{0.2cm}}\delta_g)_{(1)}\cdot(\hat{S}(\varphi(\underline{\hspace{0.2cm}}\delta_g)_{(2)})\varphi(\underline{\hspace{0.2cm}}\delta_h)\cdot x).$

		$\bullet$ Given $\varphi(\underline{\hspace{0.2cm}}\delta_{g_1}),\varphi(\underline{\hspace{0.2cm}}\delta_{g_2}),\cdots,\varphi(\underline{\hspace{0.2cm}}\delta_{g_n})\in\hat{A}_G$ and $x_1,\cdots x_m\in R$, we just need to choose the element $b=\varphi(\underline{\hspace{0.2cm}}\delta_{1_G})\in\hat{A}_G$ to obtain
		$\varphi(\underline{\hspace{0.2cm}}\delta_{g_i})\cdot x_j=\varphi(\underline{\hspace{0.2cm}}\delta_{g_i})\cdot (b\cdot x_j)$, for all $1\leq i\leq n$ and $1\leq j\leq m$.	
		
		$\bullet$ Since $\varphi(\underline{\hspace{0.2cm}}\delta_g)\cdot x=0$ for all $g\in G$, then $\varphi(\underline{\hspace{0.2cm}}\delta_{1_G})\cdot x=0$. Therefore, $x=0.$
		
		Besides that, similar to what was done, we prove that the
		partial action of $\hat{A}_G$ on $R$ is symmetric.			
		\end{proof}

\begin{rem}\label{nondegeneratealpha} Notice that for each $g\in G$, $\alpha_g$ is a nondegenerate algebra isomorphism, then it can be uniquely extended for $\alpha_g:M(R_{g^{-1}})\longrightarrow M(R_g).$ Besides that, $\sigma_g\sigma_h$ lies in $M(R_g)$.
	
\end{rem}

Conversely, we present the next result.
\begin{thm} If $(R,\cdot,\e)$ is a symmetric partial left $\hat{A}_G$-module algebra, then there exists a globalizable partial action $\alpha$ such that items \textnormal{(1)} and \textnormal{(2)} of Theorem \ref{bijside1} hold.	
\end{thm}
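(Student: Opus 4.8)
The plan is to reverse the construction of Theorem~\ref{bijside1}: from the symmetric partial $\hat{A}_G$-module structure one extracts a family of central idempotents $\sigma_g\in M(R)$ and a partial group action $\alpha=(\{R_g\}_{g\in G},\{\alpha_g\}_{g\in G})$, one verifies that these satisfy items (1) and (2) of Theorem~\ref{bijside1} (and that $\alpha$ is carried back to the original partial action, giving the announced bijection), and finally one deduces globalizability from the criterion of \cite{Micha}. Throughout I abbreviate $e_g:=\varphi(\underline{\hspace{0.2cm}}\delta_g)$, so that $e_ge_h=e_{gh}$, $e_{1_G}=1_{\hat{A}_G}$, $\hat{\Delta}(e_g)=e_g\otimes e_g$, $\hat{S}(e_g)=e_{g^{-1}}$, and I write $g\cdot x:=e_g\cdot x$ and $\sigma_g:=\mathfrak{e}(e_g)$.

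The first point is to pin down the behaviour of $e_{1_G}$. Since $\hat{A}_G$ is unital, the equality $e_gb=e_g=be_g$ forces $b=e_{1_G}$, so item (iii) of Definition~\ref{def27} already yields $g\cdot(e_{1_G}\cdot x)=g\cdot x$ for all $g,x$; together with the nondegeneracy of the action (item (iv)) this gives $e_{1_G}\cdot x=x$, whence $\hat{A}_G\cdot R=R$. I would then feed the grouplike comultiplication into items (ii) and (vi) of Definition~\ref{def27} to obtain the working identities
\[
\sigma_g(h\cdot x)=g\cdot(g^{-1}h\cdot x),\qquad g\cdot(h\cdot x)=\sigma_g(gh\cdot x),\qquad \sigma_g x=g\cdot(g^{-1}\cdot x)=x\sigma_g,
\]
from which $\sigma_g$ reads off as a central idempotent of $M(R)$ acting as the identity on $g\cdot R$ (so $\sigma_{1_G}=1_{M(R)}$), while item (i), together with $e_{1_G}\cdot y=y$, gives the multiplicativity $g\cdot(xy)=(g\cdot x)(g\cdot y)$.

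Next I would set $R_g:=\sigma_gR=R\sigma_g$: a two-sided ideal of $R$, left $s$-unital and with nondegenerate product (routine, since $\sigma_g$ is a central idempotent and $R$ has these properties), with $R_{1_G}=R$ and $R_g=g\cdot R$. Define $\alpha_g\colon R_{g^{-1}}\to R_g$ by $\alpha_g(\sigma_{g^{-1}}x):=g\cdot x$; this is well defined because $g\cdot(\sigma_{g^{-1}}x)=g\cdot x$, it is multiplicative by the identity above, and the displayed relations force $\alpha_{g^{-1}}\alpha_g=\mathrm{id}_{R_{g^{-1}}}$ and $\alpha_{1_G}=\mathrm{id}_R$, so $\alpha_g$ is an algebra isomorphism with inverse $\alpha_{g^{-1}}$. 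The remaining axioms of Definition~\ref{defpartialgroupact} reduce to the identity $\alpha_g(\sigma_{g^{-1}}\sigma_h)=\sigma_g\sigma_{gh}$, which I would obtain by evaluating both sides on $R_g=g\cdot R$ through $g\cdot(h\cdot x)=\sigma_g(gh\cdot x)$ and the extension of $\alpha_g$ to multipliers (Remark~\ref{nondegeneratealpha}); since $R_{g^{-1}}\cap R_h=\sigma_{g^{-1}}\sigma_hR$, this simultaneously yields $\alpha_g(R_{g^{-1}}\cap R_h)=R_g\cap R_{gh}$ and the cocycle identity $\alpha_g\alpha_h=\alpha_{gh}$ on $\alpha_h^{-1}(R_h\cap R_{g^{-1}})$. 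At this stage items (1) and (2) of Theorem~\ref{bijside1} are all accounted for: (1) and (2)(i)–(ii) are what was just proved, (2)(iii) is immediate from $\alpha_g(x)\in R_g=R\sigma_g$ and (2)(iv) from $R\sigma_g=R_g$; and $\varphi(\underline{\hspace{0.2cm}}\delta_g)\cdot x=g\cdot x=\alpha_g(x\sigma_{g^{-1}})$ shows that $\alpha$ is sent back to the original partial action by the recipe of Theorem~\ref{bijside1}.

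Finally, to see that $\alpha$ is globalizable, by the theorem of \cite{Micha} quoted above it suffices to check that each $R_g$ is left $s$-unital (already done) and that for each $g$ and $x$ there is a multiplier $\gamma_g(x)\in M(R)$ with $R\gamma_g(x)\subseteq R_g$ whose restriction to $R_g$ as a right multiplier is $\alpha_{g^{-1}}V_x\alpha_g$. Here the symmetry hypothesis is what makes this immediate: I would take $\gamma_g(x):=\jmath(g\cdot x)$, the image in $M(R)$ of the element $g\cdot x\in R_g$. Then $R\gamma_g(x)=R(g\cdot x)\subseteq R_g$ since $R_g$ is an ideal, and for $z\in R_g$ item (v) of Definition~\ref{def27} gives $z\gamma_g(x)=z(g\cdot x)=(e_{1_G}\cdot z)(e_g\cdot x)=e_g\cdot\big((e_{g^{-1}}\cdot z)x\big)=\alpha_g\big(\alpha_{g^{-1}}(z)\,x\big)$, which is precisely $\alpha_{g^{-1}}V_x\alpha_g$ on $R_g$; hence $\alpha$ admits a globalization. (Alternatively one may note that $R$ is quasi unitary as a partial $\hat{A}_G$-module algebra — because $e_{1_G}=1_{\hat{A}_G}$ acts as the identity — apply the Globalization Theorem~\ref{teoGlobalizacaoMA}, and observe that the resulting global $\hat{A}_G$-module algebra is a $G$-algebra containing $\varphi(R)$ as a two-sided ideal by Lemma~\ref{lemaparaAsimetr}.) The step I expect to cost the most work is the construction of $\alpha$: extracting the relations among the $\sigma_g$ from Definition~\ref{def27} and then using the multiplier extension of $\alpha_g$ to establish $\alpha_g(\sigma_{g^{-1}}\sigma_h)=\sigma_g\sigma_{gh}$ together with the compatibility axioms of Definition~\ref{defpartialgroupact}; once $\sigma_g$ is recognised as a central idempotent acting as the identity on $R_g=g\cdot R$, everything else, globalizability included, is a short computation.
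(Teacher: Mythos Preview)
Your proposal is correct and follows essentially the same route as the paper: set $\sigma_g=\mathfrak{e}(e_g)$, prove it is a central idempotent of $M(R)$, define $R_g=\sigma_gR$ and $\alpha_g(x)=g\cdot x$, verify the axioms of Definition~\ref{defpartialgroupact}, and then invoke the globalizability criterion of \cite{Micha} with $\gamma_g(x)=g\cdot x$. The only differences are cosmetic---you establish the identity $\alpha_g(\sigma_{g^{-1}}\sigma_h)=\sigma_g\sigma_{gh}$ early and use it to streamline the partial-action axioms, whereas the paper checks those axioms directly and records that identity at the end---and your parenthetical alternative via Theorem~\ref{teoGlobalizacaoMA} is an extra observation the paper does not pursue.
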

\begin{proof}
	We define, for all $g\in G$, $R_g=\e(\varphi(\underline{\hspace{0.2cm}}\delta_g))R$ and $\alpha_g: R_{g^{-1}} \longrightarrow  R_{g}$ such that $\alpha_g(x)=\varphi(\underline{\hspace{0.2cm}}\delta_g)\cdot x$, for all $x\in R_{g^{-1}} $.

Observe that $\alpha_g$ is well defined. Indeed, by the items (iii) and (iv) of Definition \ref{def27}, we have that $x=\varphi(\underline{\hspace{0.2cm}}\delta_{1_G})\cdot x$, then
\begin{eqnarray*}
	\varphi(\underline{\hspace{0.2cm}}\delta_g)\cdot x&=&\varphi(\underline{\hspace{0.2cm}}\delta_g)\cdot(\varphi(\underline{\hspace{0.2cm}}\delta_{1_G})\cdot x)\\
	&=&\varphi(\underline{\hspace{0.2cm}}\delta_g)\cdot(\varphi(\underline{\hspace{0.2cm}}\delta_{{g^{-1}}})\varphi(\underline{\hspace{0.2cm}}\delta_g)\cdot x)\\
	&=&\varphi(\underline{\hspace{0.2cm}}\delta_g)\cdot(\hat{S}(\varphi(\underline{\hspace{0.2cm}}\delta_g))\varphi(\underline{\hspace{0.2cm}}\delta_g)\cdot x)\\
	&\stackrel{\ref{def27}}{=}&\e(\varphi(\underline{\hspace{0.2cm}}\delta_g))(\varphi(\underline{\hspace{0.2cm}}\delta_g)\cdot x),	
\end{eqnarray*}
for all $x\in R$, $\varphi(\underline{\hspace{0.2cm}}\delta_g)\in \hat{A}_G.$ Thus $\varphi(\underline{\hspace{0.2cm}}\delta_g)\cdot x=\e(\varphi(\underline{\hspace{0.2cm}}\delta_g))(\varphi(\underline{\hspace{0.2cm}}\delta_g)\cdot x)\in R_g$. Besides that, $R_{1_G}=R$, since $x=\varphi(\underline{\hspace{0.2cm}}\delta_{1_G})\cdot x=\alpha_{1_G}(x)\in R_{1_G}$, for all $x\in R.$

Notice that $\e(\varphi(\underline{\hspace{0.2cm}}\delta_g))$ is a central idempotent element in $M(R).$ Indeed, for all $x\in R,$
\begin{eqnarray*}
\e(\varphi(\underline{\hspace{0.2cm}}\delta_g))\e(\varphi(\underline{\hspace{0.2cm}}\delta_g))x&=&\e(\varphi(\underline{\hspace{0.2cm}}\delta_g))\e(\varphi(\underline{\hspace{0.2cm}}\delta_g))(\varphi(\underline{\hspace{0.2cm}}\delta_{1_G})\cdot x)\\
&\stackrel{\ref{def27}}{=}&\varphi(\underline{\hspace{0.2cm}}\delta_g)\cdot (\varphi(\underline{\hspace{0.2cm}}\delta_{{g^{-1}}})\varphi(\underline{\hspace{0.2cm}}\delta_g)\cdot(\varphi(\underline{\hspace{0.2cm}}\delta_{{g^{-1}}})\cdot x))\\
&=&\varphi(\underline{\hspace{0.2cm}}\delta_g)\cdot (\varphi(\underline{\hspace{0.2cm}}\delta_{{g^{-1}}})\cdot x)\\
&=&\e(\varphi(\underline{\hspace{0.2cm}}\delta_g))x.
\end{eqnarray*}
 And,
\begin{eqnarray*}
	\e(\varphi(\underline{\hspace{0.2cm}}\delta_g))x
	 &\stackrel{\ref{def27}}{=}&\varphi(\underline{\hspace{0.2cm}}\delta_g)\cdot(\varphi(\underline{\hspace{0.2cm}}\delta_{{g^{-1}}})\varphi(\underline{\hspace{0.2cm}}\delta_{1_G})\cdot x)\\
	&\stackrel{\ref{def27}}{=}&x\e(\varphi(\underline{\hspace{0.2cm}}\delta_g)).
\end{eqnarray*}
Therefore, for all $m\in M(R)$,
\begin{eqnarray*}
	\e(\varphi(\underline{\hspace{0.2cm}}\delta_g))x&=&\e(\varphi(\underline{\hspace{0.2cm}}\delta_g))(mx)\\
	&=&(mx)\e(\varphi(\underline{\hspace{0.2cm}}\delta_g))\\
	&=&m(x\e(\varphi(\underline{\hspace{0.2cm}}\delta_g)))\\
	&=&m(\e(\varphi(\underline{\hspace{0.2cm}}\delta_g)x))\\
	&=&m\e(\varphi(\underline{\hspace{0.2cm}}\delta_g))x,
\end{eqnarray*}
for all $x\in R.$ Since $\e(\varphi(\underline{\hspace{0.2cm}}\delta_g))$ is a central element in $M(R)$, we have that $R_g$ is a right ideal of $R$  for each $g\in G.$	
\ \

$\bullet$ $\alpha_g$ is an isomorphism for each $g\in G$ and $x\in R$, since
$$\alpha_g(\alpha_{g^{-1}}(\e(\varphi(\underline{\hspace{0.2cm}}\delta_g))x))=\varphi(\underline{\hspace{0.2cm}}\delta_g)\cdot(\varphi(\underline{\hspace{0.2cm}}\delta_{g^{-1}})\cdot (\e(\varphi(\underline{\hspace{0.2cm}}\delta_g))x))	=\e(\varphi(\underline{\hspace{0.2cm}}\delta_g))x.$$

Analogously, we prove that $\alpha_{g^{-1}}(\alpha_g(\e(\varphi(\underline{\hspace{0.2cm}}\delta_{g^{-1}}))x))=\e(\varphi(\underline{\hspace{0.2cm}}\delta_{g^{-1}}))x.$ Besides that, for all $x,y\in R_{g{-1}}$,
 $$\alpha_g(xy)=
	\varphi(\underline{\hspace{0.2cm}}\delta_g)\cdot(x(\varphi(\underline{\hspace{0.2cm}}\delta_{1_G})\cdot y))
	\stackrel{\ref{def27}(i)}{=}\alpha_g(x)\alpha_g(y).$$
Therefore, $\alpha_{g}$ is an algebra isomorphism.
\ \

$\bullet$ $\alpha_{h^{-1}}(R_h\cap R_{g^{-1}})\subseteq R_{(gh)^{-1}}.$

First, we remark that for each $x\in R_g\cap R_h$ there exists $z\in R$ such that $x=\e(\varphi(\underline{\hspace{0.2cm}}\delta_g))\e(\varphi(\underline{\hspace{0.2cm}}\delta_h))z.$ Then given $x=\e(\varphi(\underline{\hspace{0.2cm}}\delta_h))\e(\varphi(\underline{\hspace{0.2cm}}\delta_{g^{-1}}))y\in R_h\cap R_{g^{-1}},$
\begin{eqnarray*}
	\alpha_{h^{-1}}(x)&=&\varphi(\underline{\hspace{0.2cm}}\delta_{h^{-1}})\cdot (\e(\varphi(\underline{\hspace{0.2cm}}\delta_h))\e(\varphi(\underline{\hspace{0.2cm}}\delta_{g^{-1}}))y)\\
	 &=&\varphi(\underline{\hspace{0.2cm}}\delta_{h^{-1}})\cdot(\varphi(\underline{\hspace{0.2cm}}\delta_h)\cdot(\varphi(\underline{\hspace{0.2cm}}\delta_{{h^{-1}}})\varphi(\underline{\hspace{0.2cm}}\delta_{g^{-1}})\cdot(\varphi(\underline{\hspace{0.2cm}}\delta_{g})\cdot y)))\\	
	&=&\varphi(\underline{\hspace{0.2cm}}\delta_{h^{-1}})\cdot (\varphi(\underline{\hspace{0.2cm}}\delta_h)\cdot (\varphi(\underline{\hspace{0.2cm}}\delta_{h^{-1}g^{-1}})\cdot (\varphi(\underline{\hspace{0.2cm}}\delta_g)\cdot y)))\\
	&=&\e(\varphi(\underline{\hspace{0.2cm}}\delta_{h^{-1}}))(\varphi(\underline{\hspace{0.2cm}}\delta_{(gh)^{-1}})\cdot(\varphi(\underline{\hspace{0.2cm}}\delta_{g})\cdot y))\\
	&=&\e(\varphi(\underline{\hspace{0.2cm}}\delta_{h^{-1}}))\e(\varphi(\underline{\hspace{0.2cm}}\delta_{(gh)^{-1}}))
	(\varphi(\underline{\hspace{0.2cm}}\delta_{(gh)^{-1}})\cdot(\varphi(\underline{\hspace{0.2cm}}\delta_g)\cdot y)),
\end{eqnarray*}
that is, $\alpha_{h^{-1}}(x)\in(R_{h^{-1}}\cap R_{(gh)^{-1}})\subseteq R_{(gh)^{-1}}.$

$\bullet$ $\alpha_{g}(\alpha_{h}(x))=\alpha_{gh}(x)$ for all $x\in \alpha_{h^{-1}}(R_h\cap R_{g^{-1}})$.
\begin{eqnarray*}
	\alpha_{g}(\alpha_{h}(x))
	&=&\varphi(\underline{\hspace{0.2cm}}\delta_{g})\cdot (\varphi(\underline{\hspace{0.2cm}}\delta_{h})\cdot x)\\
	&\stackrel{\ref{def27}}{=}&\e(\varphi(\underline{\hspace{0.2cm}}\delta_{g}))(\varphi(\underline{\hspace{0.2cm}}\delta_{gh})\cdot x)\\
	&=&\e(\varphi(\underline{\hspace{0.2cm}}\delta_{g}))\alpha_{gh}(x)\\
	&\stackrel{(\ast)}{=}&\alpha_{gh}(x),
\end{eqnarray*}
where in $(\ast)$ we use that $\alpha_{gh}(x)\in R_g$, since  $\alpha_{h^{-1}}(R_h\cap R_{g^{-1}})\subseteq R_{h^{-1}}\cap R_{(gh)^{-1}}.$ Thus $\alpha$ is a partial action of the group $G$ on $R.$

Now, we will prove that $\alpha$ is a globalizable action. First notice that $R_g$ is a left $s$-unital algebra for each $g\in G$, since $R$ is also left $s$-unital and $\e(\varphi(\underline{\hspace{0.2cm}}\delta_{g}))$ is a central idempotent in $M(R).$

$\bullet$ Define, for each $g\in G$ and $x\in R,$ $\gamma_g(x)=\varphi(\underline{\hspace{0.2cm}}\delta_{g})\cdot x.$ Naturally, $ R\gamma_g(x)\subseteq R_g.$ And,
\begin{eqnarray*} \alpha_{g}V_x\alpha_{g^{-1}}(\e(\varphi(\underline{\hspace{0.2cm}}\delta_{g}))y)
	&=&\varphi(\underline{\hspace{0.2cm}}\delta_{g})\cdot ((\varphi(\underline{\hspace{0.2cm}}\delta_{g^{-1}})\cdot (\e(\varphi(\underline{\hspace{0.2cm}}\delta_{g}))y))x)\\
	&=&(\varphi(\underline{\hspace{0.2cm}}\delta_{g})\varphi(\underline{\hspace{0.2cm}}\delta_{g^{-1}})\cdot (\e(\varphi(\underline{\hspace{0.2cm}}\delta_{g}))y))(\varphi(\underline{\hspace{0.2cm}}\delta_{g})\cdot x)\\
	&=&\e(\varphi(\underline{\hspace{0.2cm}}\delta_{g}))y(\varphi(\underline{\hspace{0.2cm}}\delta_{g})\cdot x)\\
	&=&\gamma(x)(\e(\varphi(\underline{\hspace{0.2cm}}\delta_{g}))y),
	\end{eqnarray*}
for all $y\in R$, i.e.  $\gamma_g(x)$ restricted to $R_g$ as a right multiplier, is $\alpha_{g}V_x\alpha_{g^{-1}}.$ Thus $\alpha$ is a globalizable action.

To finalize, we need to show the items (1) and (2) of Theorem \ref{bijside1}. The nondeneracy of the product in $R_g$ is immediate from the nondeneracy of the product in $R$.

Define $\sigma_g=\e(\varphi(\underline{\hspace{0.2cm}}\delta_{g}))\in M(R)$ for each $g\in G.$ By the previous observations, $\sigma_g$ is a central idempotent in $M(R)$. And it easy to check that	$\alpha_g(x)\sigma_{g}=\alpha_g(x)$, for all $x\in R_{g^{-1}}$ and $x\sigma_g\in R_g$ for all $x\in R.$

Besides that, we need to prove $\alpha_g(\sigma_{g^{-1}}\sigma_h)=\sigma_g\sigma_{gh},$ which is well defined by Remark \ref{nondegeneratealpha}. For such purpose, note that $\alpha_g(m)=(\varphi(\underline{\hspace{0.2cm}}\delta_{g})\cdot m)$, for all $m\in M(R_{g^{-1}}).$ Therefore,
\begin{eqnarray*}
\alpha_g(\sigma_{g^{-1}}\sigma_h)(x)&=&(\varphi(\underline{\hspace{0.2cm}}\delta_{g})\cdot(\e(\varphi(\underline{\hspace{0.2cm}}\delta_{g^{-1}}))\e(\varphi(\underline{\hspace{0.2cm}}\delta_{h}))))(\varphi(\underline{\hspace{0.2cm}}\delta_{1_G})\cdot x)\\
&=&\varphi(\underline{\hspace{0.2cm}}\delta_{g})\cdot(\e(\varphi(\underline{\hspace{0.2cm}}\delta_{g^{-1}}))\e(\varphi(\underline{\hspace{0.2cm}}\delta_{h}))(\varphi(\underline{\hspace{0.2cm}}\delta_{g^{-1}})\cdot x))\\
&=&\varphi(\underline{\hspace{0.2cm}}\delta_{g})\cdot(\e(\varphi(\underline{\hspace{0.2cm}}\delta_{h}))(\varphi(\underline{\hspace{0.2cm}}\delta_{g^{-1}})\cdot x))\\
&\stackrel{\ref{def27}}{=}&\varphi(\underline{\hspace{0.2cm}}\delta_{g})\cdot(\varphi(\underline{\hspace{0.2cm}}\delta_{h})\cdot (\varphi(\underline{\hspace{0.2cm}}\delta_{h^{-1}g^{-1}})\cdot x))\\
&\stackrel{\ref{def27}}{=}&\e(\varphi(\underline{\hspace{0.2cm}}\delta_{g}))(\varphi(\underline{\hspace{0.2cm}}\delta_{gh})\cdot (\varphi(\underline{\hspace{0.2cm}}\delta_{(gh)^{-1}})\cdot x))\\
&=&\e(\varphi(\underline{\hspace{0.2cm}}\delta_{g}))\e(\varphi(\underline{\hspace{0.2cm}}\delta_{gh})) x\\
&=&\sigma_g\sigma_{gh}(x),
\end{eqnarray*}
for all $x\in R_g$.
\end{proof}

\section{Acknowledgments}
We would like to thank to E. Batista for his availability and interest in
discussing our questions contributing to the evolution of this research. In the same
way to A. Van Daele for remaining willing to enlighten us about the theory of Multiplier
Hopf Algebras. Also to A. Paques whose corrections and suggestions
were very fruitful for the development of this work.
\addcontentsline{toc}{chapter}{Referências Bibliográficas}

\end{document}